\renewcommand{\epsilon}{\varepsilon}
\newcommand\N{\mathbb N}
\newcommand\Z{\mathbb Z}
\newcommand{\cM}{\mathcal{M}}
\newtheorem{Theorem}{Theorem}
\newtheorem*{NNTheorem}{Theorem}
\newtheorem*{MConjecture}{Main Conjecture}
\newtheorem{Lemma}[Theorem]{Lemma}
\newtheorem{Corollary}[Theorem]{Corollary}
\newtheorem{Proposition}[Theorem]{Proposition}
\theoremstyle{definition}
\newtheorem{Remark}[Theorem]{Remark}
\begin{document}

\allowdisplaybreaks

\newcommand{\arXivNumber}{2004.02749}

\renewcommand{\thefootnote}{}

\renewcommand{\PaperNumber}{086}

\FirstPageHeading

\ShortArticleName{Uniform Lower Bound for Intersection Numbers of $\psi$-Classes}

\ArticleName{Uniform Lower Bound for Intersection Numbers\\ of $\boldsymbol{\psi}$-Classes\footnote{This paper is a~contribution to the Special Issue on Algebra, Topology, and Dynamics in Interaction in honor of Dmitry Fuchs. The full collection is available at \href{https://www.emis.de/journals/SIGMA/Fuchs.html}{https://www.emis.de/journals/SIGMA/Fuchs.html}}}

\Author{Vincent DELECROIX~$^{\dag^1}$, \'Elise GOUJARD~$^{\dag^2}$, Peter ZOGRAF~$^{\dag^3\dag^4}$ and Anton ZORICH~$^{\dag^5\dag^6}$}

\AuthorNameForHeading{V.~Delecroix, \'E.~Goujard, P.~Zograf and A.~Zorich}

\Address{$^{\dag^1}$~LaBRI, Domaine universitaire, 351 cours de la Lib\'eration, 33405 Talence, France}
\EmailDD{\href{mailto:20100.delecroix@gmail.com}{20100.delecroix@gmail.com}}

\Address{$^{\dag^2}$~Institut de Math\'ematiques de Bordeaux, Universit\'e de Bordeaux,\\
\hphantom{$^{\dag^2}$}~351 cours de la Lib\'eration, 33405 Talence, France}
\EmailDD{\href{mailto:email@address}{elise.goujard@gmail.com}}

\Address{$^{\dag^3}$~Steklov Mathematical Institute, Fontanka 27, St.~Petersburg 191023, Russia}
\EmailDD{\href{mailto:zograf@pdmi.ras.ru}{zograf@pdmi.ras.ru}}

\Address{$^{\dag^4}$~Chebyshev Laboratory, St.~Petersburg State University,\\
\hphantom{$^{\dag^4}$}~14th Line V.O.~29B, St.~Petersburg, 199178, Russia}

\Address{$^{\dag^5}$~Center for Advanced Studies, Skoltech, Russia}
\EmailDD{\href{mailto:anton.zorich@gmail.com}{anton.zorich@gmail.com}}

\Address{$^{\dag^6}$~Institut de Math\'ematiques de Jussieu -- Paris Rive Gauche, B\^atiment Sophie Germain,\\
\hphantom{$^{\dag^6}$}~Case 7012, 8 Place Aur\'elie Nemours, 75205 PARIS Cedex 13, France}

\ArticleDates{Received April 09, 2020, in final form August 21, 2020; Published online August 26, 2020}

\Abstract{We approximate intersection numbers $\big\langle \psi_1^{d_1}\cdots \psi_n^{d_n}\big\rangle_{g,n}$ on Deligne--Mumford's moduli space $\overline{\cM}_{g,n}$ of genus~$g$ stable complex curves with $n$ marked points by certain closed-form expressions in $d_1,\dots,d_n$. Conjecturally, these approximations become asymptotically exact uniformly in $d_i$ when $g\to\infty$ and $n$ remains bounded or grows slowly. In this note we prove a lower bound for the intersection numbers in terms of the above-mentioned approximating
expressions multiplied by an explicit factor $\lambda(g,n)$, which tends to $1$ when $g\to\infty$ and
$d_1+\dots+d_{n-2}=o(g)$.}

\Keywords{intersection numbers; $\psi$-classes; Witten--Kontsevich correlators; moduli space of curves; large genus asymptotics}

\Classification{14C17; 14H70}

\begin{flushright}
\begin{minipage}{60mm}
\it Dedicated to D.B.~Fuchs\\ on the occasion of his 80th birthday
\end{minipage}
\end{flushright}

\renewcommand{\thefootnote}{\arabic{footnote}}
\setcounter{footnote}{0}

\section{Introduction}\label{s:Introduction}

Let $\overline{\mathcal{M}}_{g,n}$ be the Deligne--Mumford
moduli space of genus $g$ complex stable algebraic curves
(possibly with nodes), with $n>0$ distinct labeled marked
points. Consider the tautological line bundles
$\mathcal{L}_i\to\overline{\mathcal{M}}_{g,n}$, $i=1,\dots,n$, defined fiberwise by
$\mathcal{L}_i|_{C,x_1,\dots,x_n} \cong
T^*_{x_i}C$, where $C$ is a~genus~$g$ curve with marked
points $x_1,\dots,x_n$ (the definition makes sense since
the marked points are not allowed to coincide with the nodes).

In the late 1980-ies E.~Witten \cite{Witten} introduced a
theory of two-dimensional topological gravity, where the
classes $\psi_i=c_1(\mathcal{L}_i)$, $i=1,\dots,n,$ played
the role of observables, and the intersection numbers
\begin{gather*}
\langle\tau_{d_1}\cdots\tau_{d_n}\rangle_{g,n}
=\big\langle\psi_1^{d_1}\cdots\psi_n^{d_n}\big\rangle_{g,n}
=\int_{\overline{\mathcal{M}}_{g,n}}
\psi_1^{d_1}\cdots\psi_n^{d_n} ,
\end{gather*}
where $d_1+\dots +d_n=3g-3+n$, represented correlators of
the theory. Following a common convention, we will omit $g$
and $n$, or just $n$ when they are clear from the context.
On the basis of a low genus evidence and considerations
from physics, E.~Witten conjectured that the generating function
\begin{gather*}
F(t_0,t_1,\dots)
=\sum_{\substack{g\geq 0\\n\geq 1}}\frac{1}{n!}
\sum_{\substack{d_1+\cdots+ d_n\\=3g-3+n}}
\langle\tau_{d_1}\cdots\tau_{d_n}\rangle
t_{d_1}\cdots t_{d_n}
\end{gather*}
(total free energy of Witten's two-dimensional topological
gravity) satisfies the KdV (Korteveg--de Vries) hierarchy. An
equivalent formulation of Witten's conjecture is that the
partition function $e^F$ is a highest weight vector of a
Virasoro algebra representation, see papers~\cite{Dij, DVV}
of R.~Dijkgraaf and of R.~Dijkgraaf, H.~Verlinde, and E.~Verlinde. Witten's conjecture was first proven by
M.~Kontsevich \cite{Kontsevich}, and then several other
proofs appeared, including the proofs due to
A.~Okounkov and R.~Pandharipande~\cite{Okounkov:Pandharipande},
to M.~Kazarian and S.~Lando~\cite{Kazarian:Lando},
to M.~Mirzakhani~\cite{Mirzakhani:simple:geodesics:and:volumes,
Mirzakhani:volumes:and:intersection:theory}. Note that the
two-dimensional topological gravity may be interpreted as
the simplest instance of the Gromov--Witten theory, where
the tagret space is a point.

Computability of the intersection numbers
$\langle\tau_{d_1}\cdots\tau_{d_n}\rangle$ is an important
open problem. Besides the cases of small $g$ or $n$ no
general explicit formulas for the numbers
$\langle\tau_{d_1}\cdots\tau_{d_n}\rangle_{g,n}$ are known.
In applications it is often sufficient to know large genus
behavior of these intersection numbers, and here either not
much was known in general before the current paper
and before even more recent beautiful
work by A.~Aggarwal~\cite{Aggarwal:intersection:numbers}.

In this note we prove a simple lower bound for the numbers
that appears to be asymptotically exact as $g\to\infty$. To
begin with, we recall some basic facts about the numbers
$\langle\tau_{d_1}\cdots\tau_{d_n}\rangle$ (Witten's
correlators). They are uniquely defined by the initial data
\[
\big\langle \tau_0^3 \big\rangle = 1,\qquad
\langle \tau_{1} \rangle = \frac{1}{24}
\]
via the recursive relations known as Virasoro constraints that we present below.

\textbf{Virasoro constraints} (in Dijkgraaf--Verlinde--Verlinde form~\cite{Dij, DVV}):
\begin{gather}
\langle\tau_{k+1}\tau_{d_1}\cdots\tau_{d_n}\rangle_g=\frac{1}{(2k+3)!!}\Bigg[\sum_{j=1}^n
\frac{(2k+2d_j+1)!!}{(2d_j-1)!!}\langle\tau_{d_1}\cdots
\tau_{d_{j}+k}\cdots\tau_{d_n}\rangle_g\nonumber\\
\hphantom{\langle\tau_{k+1}\tau_{d_1}\cdots\tau_{d_n}\rangle_g=}{} +\frac{1}{2}\sum_{\substack{r+s=k-1\\r,s\ge 0}}
(2r+1)!!(2s+1)!!\langle\tau_r\tau_s\tau_{d_1}\cdots\tau_{d_n}\rangle_{g-1}\nonumber\\
\hphantom{\langle\tau_{k+1}\tau_{d_1}\cdots\tau_{d_n}\rangle_g=}{} +\frac{1}{2}\sum_{\substack{r+s=k-1\\r,s\ge 0}} (2r+1)!!(2s+1)!!\nonumber\\
\hphantom{\langle\tau_{k+1}\tau_{d_1}\cdots\tau_{d_n}\rangle_g=}{}\times
\sum_{\{1,\dots,n\}=I\coprod J}\bigg\langle\tau_r\prod_{i\in
I}\tau_{d_i}\bigg\rangle_{g'}\bigg\langle\tau_s\prod_{i\in
J}\tau_{d_i}\bigg\rangle_{g-g'}\Bigg].\label{eq:virasoro}
\end{gather}

For $k=-1$ and $k=0$ the above relations have particularly simple form.

\textbf{String equation ($\boldsymbol{k=-1}$):}
\begin{gather}\label{eq:string}
\langle \tau_0\tau_{d_1} \cdots \tau_{d_n}\rangle_{g,n+1}
=\langle \tau_{d_1-1} \cdots \tau_{d_n}\rangle_{g,n}
+\dots
+\langle \tau_{d_1} \cdots \tau_{d_n-1}\rangle_{g,n} .
\end{gather}

\textbf{Dilaton equation ($\boldsymbol{k=0}$):}
\begin{gather*}
\langle \tau_1\tau_{d_1} \cdots \tau_{d_n}\rangle_{g,n+1}
=(2g-2+n)\langle \tau_{d_1} \cdots \tau_{d_n}\rangle_{g,n} .
\end{gather*}

For any partition $\boldsymbol{d}$ of $3g-3+n$
into a sum of $n$ nonnegative integers define
$\epsilon(\boldsymbol{d})$ through the following equation:
\begin{gather*}
\big\langle \psi_1^{d_1} \cdots \psi_n^{d_n}\big\rangle_{g,n}
=
\frac{(6g-5+2n)!!}{(2d_1+1)!!\cdots(2d_n+1)!!}
\cdot\frac{1}{g!\cdot 24^g} \cdot\big(1+\epsilon(\boldsymbol{d})\big) .
\end{gather*}

We denote by $\Pi(m,n)$ the
set of ordered partitions of an integer $m$ into a sum of $n$ nonnegative integers.

\begin{MConjecture}[\cite{DGZZ:volume}]
For any strictly positive constant $C$
\[
\lim_{g\to+\infty}
\max_{1\le n\le C\log(g)}
\max_{\boldsymbol{d}\in\Pi(3g-3+n,n)}
|\epsilon(\boldsymbol{d})|=0 .
\]
\end{MConjecture}

Corollary~\ref{cor:inf:0} below makes the
first step towards a proof of the Main Conjecture. It
establishes an efficient uniform lower bound for
$\epsilon(\boldsymbol{d})$ for those partitions
$\boldsymbol{d}$ for which the sum of the first $n-2$
entries is small with respect to the sum of the remaining
two entries.

\begin{Remark}\label{rm:permutation:of:d}
It follows from the definition of $\epsilon(\boldsymbol{d})$ that $\epsilon(\boldsymbol{d})$ does not change under any permutation of the entries of $\boldsymbol{d}$.
\end{Remark}

\begin{Remark}\label{rm:alpha:1:2}
It is plausible, that much stronger statement might be
true, where the bound $n<C\log(g)$ is replaced by the bound
$n<g^\alpha$ with any fixed $\alpha$ satisfying
$\alpha<\frac{1}{2}$. The reason why one
cannot go beyond the bound $n<\sqrt{g}$ is explained at the
very end of Section~\ref{s:Introduction}.
\end{Remark}

\begin{Remark}[added in proofs]\label{rm:Aggarwal}
The Main Conjecture was proved in a very strong form in the
recent paper~\cite{Aggarwal:intersection:numbers} of
A.~Aggarwal.
\end{Remark}

\textbf{Motivation.}
Certain universality phenomena in flat and hyperbolic
geometry and in dynamics of surfaces manifest themselves in
large genera. The large genus asymptotics of the
Masur--Veech volumes of strata in moduli spaces of Abelian
differentials conjectured in~\cite{Eskin:Zorich} was
successfully proved by independent methods
in~\cite{Aggarwal:Volumes} and
in~\cite{Chen:Moeller:Sauvaget:Zagier}.
However, the analogous conjectures stated
in~\cite{ADGZZ:conjecture} on the large genus asymptotics
of the Masur--Veech volumes of strata in moduli spaces of
quadratic differentials are open.

There are several approaches to evaluation of Masur--Veech
volumes of such strata. The original approach of A.~Eskin
and A.~Okounkov discovered in~\cite{Eskin:Okounkov:pillowcase}
uses
characters of the symmetric group. Using modern computers
for exact computations based on this approach, \'E.~Goujard
evaluated in~\cite{Goujard:volumes} the volumes of all
strata up to dimension~11. Currently it is not known how to
obtain volume asymptotics based on this approach.

The recent paper~\cite{Chen:Moeller:Sauvaget} of D.~Chen,
M.~M\"oller and A.~Sauvaget expressed the Masur--Veech
volume of the principal stratum through certain very
special linear Hodge integrals. The papers~\cite{Kazarian}
of M.~Kazarian and~\cite{Yang:Zagier:Zhang} of D.~Yang,
D.~Zagier and Y.~Zhang provide extremely efficient
recursive formulae for these Hodge integrals, which allow
to compute exact values of the volumes of the principal
strata up to genus 250 and more. However, currently it is
not known how to prove large genus
asymptotic formulae for the Masur--Veech volume
of the principal stratum developing
this approach.

One more approach to evaluation of Masur--Veech volumes is
elaborated in our paper~\cite{DGZZ:volume} where we express
the Masur--Veech volume of the principal stratum of
meromorphic quadratic differentials with at most simple
poles through intersection numbers of $\psi$-classes and
suggest conjectures aimed to prove large genus
asymptotics of these volumes. This conjectural scheme
involves the Main Conjecture stated above as one of the key
ingredients.

\begin{Remark}[added in proofs]\label{rm:Added:in:proofs}
The asymptotic formula for the Masur--Veech volume
conjectured in~\cite{DGZZ:volume} was recently proved
in~\cite{Aggarwal:intersection:numbers} by A.~Aggarwal.
Together with the Main Conjecture proved by
A.~Aggarwal in the same paper, this
allowed us to provide in~\cite{DGZZ:random:multicurves} a
detailed description of the asymptotic geometry of random
square-tiled surfaces and of random simple closed
multicurves on surfaces of large genus.
\end{Remark}

\subsection{State of the art}\label{ss:state:of:the:art}

Currently we have the following evidence towards the Main Conjecture. Direct computation shows that $\epsilon(0,0,0)=0$. It is known~\cite{Witten} that
\[
\langle\tau_{3g-2}\rangle_{g,1}=\frac{1}{24^g\cdot g!} ,
\]
so for all $1$-correlators we have
\begin{gather}\label{eq:epsilon:for:1:correlators}
\epsilon(3g-2)=0 .
\end{gather}

Applying the string equation recursively we get
\begin{gather}
\label{eq:almost:zero:partition}
\big\langle\tau_0^{n-1}\tau_{3g-3+n}\big\rangle_{g,n}=\frac{1}{24^g\cdot g!} ,
\end{gather}
so for all partitions with at most one nonzero entry we have
\begin{gather}\label{eq:epsilon:0:1corr}
\epsilon\big(0^{n-1},3g-3+n\big)=0 .
\end{gather}

For $2$-correlators the Main Conjecture is valid. Namely, by Remark~\ref{rm:permutation:of:d}, we have $\epsilon(d_1,d_2)= \epsilon(d_2,d_1)$ for any $(d_1,d_2)\in\Pi(3g-1,2)$. Thus, we may assume that $d_1<d_2$. We have already seen that $\epsilon(0,3g-1)=0$. For the remaining $2$-partitions we have the following bounds:

\begin{NNTheorem}[\cite{DGZZ:volume}]
For all $g\in\N$ and for all integer $k$ satisfying
$2\le k\le \frac{3g-1}{2}$ the following bounds are valid:
\begin{gather}\label{eq:main:bounds}
- \frac{2}{6g-1}=\epsilon(1,3g-2)< \epsilon(k,3g-1-k)< 0=\epsilon(0,3g-1) .
\end{gather}
\end{NNTheorem}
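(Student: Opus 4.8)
The plan is to derive both inequalities in~\eqref{eq:main:bounds} by a direct induction on $k$ using the Virasoro constraints~\eqref{eq:virasoro} in the specialized form they take for $2$-correlators, combined with the normalization that defines $\epsilon(\boldsymbol d)$. First I would rewrite the definition of $\epsilon$ so that proving \eqref{eq:main:bounds} is equivalent to a monotonicity statement about the renormalized quantities
\[
a_k \;=\; (2k+1)!!\,(2(3g-1-k)+1)!!\,\big\langle \tau_k\tau_{3g-1-k}\big\rangle_{g,2},
\]
namely that $a_0 > a_1 > a_2 > \dots$ and, more precisely, that $a_k$ for $k\ge 1$ lies strictly between $a_1$ and $a_0$; the endpoint values $\epsilon(0,3g-1)=0$ and $\epsilon(1,3g-2)=-2/(6g-1)$ are already recorded in the excerpt (equation~\eqref{eq:epsilon:0:1corr} and a short computation from the dilaton equation), so the content is the strict interlacing for intermediate $k$.

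The key step is to apply the Virasoro relation \eqref{eq:virasoro} with $n=1$, writing $\langle\tau_{k+1}\tau_{d_1}\rangle_g$ in terms of: (i) a ``string-like'' term $\langle\tau_{d_1+k}\rangle_g$, which is a $1$-correlator and hence known exactly by \eqref{eq:epsilon:for:1:correlators}; (ii) a genus-drop term $\sum_{r+s=k-1}(2r+1)!!(2s+1)!!\langle\tau_r\tau_s\tau_{d_1}\rangle_{g-1}$, which is a lower-genus $3$-correlator; and (iii) the disconnected sum over $I\sqcup J=\{1\}$, which produces products of $1$- and $2$-correlators in genera $g'$ and $g-g'$. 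One then substitutes the defining expression for $\epsilon$ everywhere. The $1$-correlator contributions contribute exactly (no $\epsilon$), the double-factorial prefactors telescope against the $1/(2k+3)!!$ in front, and what remains is an identity expressing $\epsilon(k+1,3g-2-k)$ as a convex-combination-like average of $\epsilon$'s of $2$-correlators of the same genus $g$ together with strictly negative ``error'' contributions coming from the genus-$(g-1)$ and disconnected terms (these are manifestly negative because the corresponding $\langle\cdots\rangle$ are positive while they enter with the ``wrong'' combinatorial weight relative to the leading asymptotics). Feeding in the inductive hypothesis $a_1 < a_j < a_0$ for $j\le k$ then forces $a_1 < a_{k+1} < a_0$, with strictness preserved by the strictly negative error terms.

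I expect the main obstacle to be bookkeeping: showing that after the substitution the coefficients of the same-genus $\epsilon$'s are nonnegative and sum to something $\le 1$ (so that one really gets a sub-average, not merely a linear combination), and that the residual terms have a definite sign uniformly in $k$. This amounts to carefully tracking ratios of double factorials such as $(2k+2d+1)!!/((2k+3)!!\,(2d-1)!!)$ and comparing them with the ratio predicted by the leading term $(6g-5+2n)!!/\prod(2d_i+1)!!$; the inequalities needed are elementary but must be shown to hold for the whole range $2\le k\le (3g-1)/2$, and the genus-$(g-1)$ term in particular requires knowing that $\langle\tau_r\tau_s\rangle_{g-1}$ is, after renormalization, bounded by the relevant genus-$g$ normalization — which is where one uses that $1/(g!\,24^g)$ versus $1/((g-1)!\,24^{g-1})$ gains a factor $1/(24g)$ that dominates the polynomial-in-$g$ loss from the double factorials. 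Once the sign of each block is pinned down, the induction closes and yields exactly \eqref{eq:main:bounds}.
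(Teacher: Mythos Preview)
This theorem is not proved in the present paper at all: it is quoted from \cite{DGZZ:volume}, and the paper explicitly says that the analysis of $\epsilon(k,3g-1-k)$ there is ``based on~\cite{Zograf:2:correlators}'', i.e.\ on Zograf's explicit closed formula for the $2$-correlators. So the comparison you asked for is between your Virasoro-induction plan and an approach that starts from an exact expression for $\langle\tau_k\tau_{3g-1-k}\rangle_g$ and reads off the two-sided bound directly.

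Your plan, as written, has a structural gap. When you apply the Virasoro relation~\eqref{eq:virasoro} with $n=1$ to $\langle\tau_{k+1}\tau_{d_1}\rangle_g$, the first sum has a single term and it is the \emph{one}-correlator $\langle\tau_{d_1+k}\rangle_g$, not a two-correlator; the genus-drop sum produces $3$-correlators in genus $g-1$; and the disconnected sum produces products of a $2$-correlator and a $1$-correlator in \emph{strictly smaller} genera $g'$ and $g-g'$. There is no term of the form $\langle\tau_j\tau_{3g-1-j}\rangle_g$ with $j\le k$ on the right-hand side. Hence the ``convex-combination-like average of $\epsilon$'s of $2$-correlators of the same genus $g$'' on which your induction is supposed to run simply does not appear, and the induction on $k$ at fixed $g$ has nothing to bite on.

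There is also a sign issue that blocks the upper bound $\epsilon(k,3g-1-k)<0$. All the extra terms in~\eqref{eq:virasoro} --- the genus-drop $3$-correlators and the disconnected products --- are nonnegative intersection numbers entering with positive coefficients, so after dividing by $\lfloor\tau_{k+1}\tau_{d_1}\rfloor_g$ they push $\epsilon$ \emph{up}, not down. Concretely, Lemma~\ref{lm:Virasoro} with $n=1$ gives $\delta_{\mathrm{Virasoro}}(k+1,d_1)=-\tfrac{2}{6g-1}+\tfrac{6k}{(6g-1)(6g-3)}$, and on top of this you must add the (positive) $\epsilon$-corrections from the genus-$(g-1)$ terms and the entire (positive) disconnected contribution. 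Showing that the total stays strictly below $0$ for all $2\le k\le(3g-1)/2$ would require nontrivial \emph{upper} bounds on those lower-genus pieces, which your outline does not supply; this is exactly why the paper uses Virasoro only for lower bounds (Remark~\ref{rm:lower:bound:Virasoro}) and imports \eqref{eq:main:bounds} from the explicit-formula approach of \cite{Zograf:2:correlators,DGZZ:volume}.
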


We performed a detailed analysis of $\epsilon(k,3g-1-k)$ in~\cite{DGZZ:volume}
based on~\cite{Zograf:2:correlators}. In particular, for large~$g$ the error term $\epsilon(k,3g-1-k)$ rapidly tends to $0$ when $k$ approaches $\frac{3g-1}{2}$, so the statement of the above theorem can be seriously strengthened, if needed.

It is easy to compute $\epsilon\big(\boldsymbol{d}\big)$
explicitly for those partitions where all but one entries
$d_1,\dots,d_{n-1}$ are equal to $0$ or $1$. Namely,
we first apply recursively
the dilaton equation eliminating all those entries of the
partition, which are equal to $1$, and then apply~\eqref{eq:almost:zero:partition}.
In particular,
\[
\big\langle\tau_1^{n-1}\tau_{3g-2}\big\rangle_{g,n} =(2g-3+n)(2g-4+n)\cdots(2g-1)\cdot\frac{1}{24^g\cdot g!} ,
\]
so
\begin{gather*}
1+\epsilon ((1,1,\dots,1,3g-2) )=1+\epsilon\big(\big(1^{n-1},3g-2\big)\big)
 \\
 \hphantom{1+\epsilon ((1,1,\dots,1,3g-2) )}{} =
(2g-3+n)(2g-4+n)\cdots(2g-1)\cdot\frac{3^{n-1}\cdot(6g-3)!!}{(6g-5+2n)!!} \\
 \hphantom{1+\epsilon ((1,1,\dots,1,3g-2) )}{} =
\frac{6g-3+3(n-2)}{6g-1+2(n-2)}\cdot\frac{6g-3+3(n-3)}{6g-1+2(n-3)}\cdots\frac{6g-3}{6g-1} .
\end{gather*}
This implies that
for any constant $\alpha$ satisfying $0<\alpha<\frac{1}{2}$
(respectively $\frac{1}{2}<\alpha$)
we have
\begin{gather*}
\lim_{g\to+\infty}\max_{1\le n \le g^\alpha}
\big|\epsilon\big(1^{n-1},3g-2\big)\big|=
0, \quad \text{when }0<\alpha<\frac{1}{2} ,
\\
\lim_{g\to+\infty}\inf_{n \ge g^\alpha}
\epsilon\big(1^{n-1},3g-2\big)=
+\infty, \quad \text{when }\frac{1}{2}<\alpha ,
\end{gather*}
which explains why the restriction $\alpha<\frac{1}{2}$ in Remark~\ref{rm:alpha:1:2} cannot be loosened.

\section{Uniform lower bound}\label{s:Uniform:lower:bound}

Given a real
number $L$ and integers $g\ge 1$ and $n\ge 3$, denote
by $\Pi_L(3g-3+n,n)$ the following subset
of ordered partitions:
\[
\Pi_L(3g-3+n,n)=\big\{\boldsymbol{d}\in\Pi(3g-3+n,n) \,\big|\, d_1+\dots+d_{n-2} \le L \big\} .
\]
For any nonnegative $L$ and any integer $g\ge 1$ we define
$\Pi_L(3g-2,1)=\Pi(3g-2,1)$ and
$\Pi_L(3g-1,2)=\Pi(3g-1,2)$.

Define the following function of integer arguments $g$, $L$, satisfying $g>L\ge 0$:
\begin{gather}
\label{eq:lambda}
\lambda(g,L)=\left(\prod_{i=0}^{L-1}\left(1-\frac{1}{6(g-i)+1}\right)\right)
\cdot\left(1-\frac{2}{6(g-L)-1}\right) ,
\end{gather}
where, by convention,
\begin{gather}
\label{eq:lambda:0}
\lambda(g,0)=\left(1-\frac{2}{6g-1}\right) .
\end{gather}

It follows from the definition of $\lambda(g,L)$ that $0<\lambda(g,L)<1$ for any $g>L\ge 0$.

\begin{Theorem}\label{th:Main:Theorem}
Let $g$, $L$ be nonnegative integers such that $g>L$. For any partition
$\boldsymbol{d}\in\Pi_{L}(3g-3+n,n)$ one has
\begin{gather}
\label{eq:1:plus:epsilon:samller:than:lambda}
\epsilon(\boldsymbol{d})\ge\lambda(g,L)-1 .
\end{gather}
\end{Theorem}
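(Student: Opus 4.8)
The plan is to prove, by induction on $g$ with an inner induction on the number $n$ of marked points, the slightly stronger assertion that $1+\epsilon(\boldsymbol{d})\ge\lambda(g,m)$ for every partition $\boldsymbol{d}$ with $m:=m(\boldsymbol{d})<g$, where $m(\boldsymbol{d})$ denotes the sum of all entries of $\boldsymbol{d}$ other than its two largest. By Remark~\ref{rm:permutation:of:d} we may sort $d_1\le\dots\le d_n$, so $m=d_1+\dots+d_{n-2}$; since $\boldsymbol{d}\in\Pi_L$ means precisely $m\le L$ and $\lambda(g,\cdot)$ is decreasing in its second argument (equivalently $\lambda(g,L)=\tfrac{6g}{6g+1}\lambda(g-1,L-1)$ with $\lambda(g,0)=1-\tfrac2{6g-1}$), this implies \eqref{eq:1:plus:epsilon:samller:than:lambda}. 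The cases $n\le 2$ need nothing new: $\epsilon=0$ for $n=1$ by \eqref{eq:epsilon:for:1:correlators}, and for $n=2$ the quoted two-point bound \eqref{eq:main:bounds} gives $1+\epsilon(\boldsymbol{d})\ge 1-\tfrac2{6g-1}=\lambda(g,0)\ge\lambda(g,m)$.

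For $n\ge 3$ I split according to the smallest entry $d_1$. If $d_1=0$, applying the string equation \eqref{eq:string} to $\langle\tau_0\tau_{d_2}\cdots\tau_{d_n}\rangle_{g,n}$ and rewriting it through the definition of $\epsilon$ yields $(6g-5+2n)\,(1+\epsilon(\boldsymbol{d}))=\sum_{j:\,d_j\ge1}(2d_j+1)\,(1+\epsilon(\boldsymbol{d}^{(j)}))$, where $\boldsymbol{d}^{(j)}$ is the $(n-1)$-point partition obtained by deleting the zero and decrementing $d_j$; the coefficients $(2d_j+1)/(6g-5+2n)$ sum to at least $1$, and $m(\boldsymbol{d}^{(j)})\le m$, so the inner hypothesis gives $1+\epsilon(\boldsymbol{d})\ge\lambda(g,m)$. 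If $d_1=1$, the dilaton equation gives $1+\epsilon(\boldsymbol{d})=\tfrac{6g+3n-9}{6g+2n-5}\,(1+\epsilon(\boldsymbol{d}'))$ with $m(\boldsymbol{d}')=m-1$; for $n\ge 4$ the prefactor is $\ge 1$ and the inner hypothesis finishes, while for $n=3$ one has $m=1$ and $\boldsymbol{d}'$ a two-point partition, and $\tfrac{6g}{6g+1}(1-\tfrac2{6g-1})>\lambda(g,1)$ closes the case.

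The case $d_1\ge 2$ is where the genus-lowering mechanism built into $\lambda$ is used, and it is the main obstacle. Here I apply the Virasoro relation \eqref{eq:virasoro} to $\langle\tau_{d_1}\tau_{d_2}\cdots\tau_{d_n}\rangle_{g,n}$ with $\tau_{d_1}=\tau_{k+1}$, $k=d_1-1\ge 1$, and use nonnegativity of intersection numbers to discard the splitting terms. The contact terms are $(n-1)$-point genus-$g$ correlators, each of $m$-value at most $m-1$ (controlled by the inner hypothesis, or by \eqref{eq:main:bounds} when $n=3$); the genus-lowering term $\tfrac12\sum_{r+s=d_1-2}(2r+1)!!(2s+1)!!\langle\tau_r\tau_s\tau_{d_2}\cdots\tau_{d_n}\rangle_{g-1}$ is a sum of $(n+1)$-point genus-$(g-1)$ correlators which, because $r,s\le d_1-2<d_2$, all have $m$-value exactly $(r+s)+(d_2+\dots+d_{n-2})=m-2$, hence are bounded below by $\lambda(g-1,m-2)$ via the outer hypothesis. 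Substituting these into the $\epsilon$-normalized form of \eqref{eq:virasoro} and using $\lambda(g,m-1)=\tfrac{6g}{6g+1}\lambda(g-1,m-2)$, the target $1+\epsilon(\boldsymbol{d})\ge\lambda(g,m)=\tfrac{6g}{6g+1}\lambda(g-1,m-1)$ reduces — after cancelling the common factor $\lambda(g-1,m-2)$ and using $\lambda(g-1,m-2)\ge\lambda(g-1,m-1)$ — to the elementary inequality $(n-4)(6g+2n-3-4d_1)\ge 0$, which holds for all $n\ge 4$ because $d_1<g$ makes the second factor positive.

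The genuinely delicate point is $n=3$: there the step $\lambda(g-1,m-2)\ge\lambda(g-1,m-1)$ discards too much, and one must keep the exact value $\lambda(g-1,d_1-2)$ of the bound on the four-point genus-lowering correlators, while using for the two two-point contact terms (of combined weight $6g-2d_1+2$) the sharp bound $1-\tfrac2{6g-1}$ rather than the weaker $\lambda(g,d_1-1)$. The surviving inequality $(6g-2d_1+2)(6g-3)+12g(d_1-1)\lambda(g-1,d_1-2)\ge 6g(6g-1)\lambda(g-1,d_1-1)$ then holds for all $2\le d_1<g$ by a direct computation, and it is exactly this computation that pins down the shape of $\lambda$. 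Assembling the three cases completes the inductive step, and the base case $g=1$ is automatic since $m<1$ forces $m=0$, reducing it to the $n\le 2$ analysis or the $d_1=0$ step.
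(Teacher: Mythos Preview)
Your overall architecture parallels the paper's: the same case split via the string, dilaton, and Virasoro relations, and the same monotonicity and recursion properties of $\lambda$. The induction scheme is different (outer on $g$, inner on $n$, versus the paper's single induction on the level $L$ in Proposition~\ref{eq:prop:step:of:induction}), but that reorganization is legitimate. The $d_1=0$ and $d_1=1$ cases are correct, and your reduction of the $d_1\ge 2$, $n\ge 4$ case to the polynomial inequality $(n-4)(6g+2n-3-4d_1)\ge 0$ checks out.

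The gap is at $n=3$, $d_1\ge 2$. The inequality you write,
\[
(6g-2d_1+2)(6g-3) + 12g(d_1-1)\,\lambda(g-1,d_1-2) \;\ge\; 6g(6g-1)\,\lambda(g-1,d_1-1),
\]
is \emph{not} a ``direct computation'': the two $\lambda$'s are products of $d_1-1$ and $d_1$ factors respectively, so no polynomial manipulation in $g$ and $d_1$ alone will verify it. The inequality is true, but its proof is precisely the paper's mechanism, which you already have in hand for $n\ge 4$. After dividing by $(6g+1)(6g-1)$ the inequality reads $C\,\lambda(g,0)+G\,\lambda(g-1,d_1-2)\ge\lambda(g,d_1)$, with $C,G$ your contact and genus-lowering weights. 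Since both $\lambda(g,0)$ and $\lambda(g-1,d_1-2)$ dominate $\lambda(g,d_1-1)$ (by monotonicity~\eqref{eq:lambda:inequalities} and the recursion~\eqref{eq:lambda:inequality:bis}), the left side is at least $(C+G)\,\lambda(g,d_1-1)$; then $C+G=1+\delta_{\rm Virasoro}\ge\tfrac{6g}{6g+1}$ (the $n=2$ instance of Corollary~\ref{cor:delat:Virasoro:bound}, immediate from~\eqref{eq:delta:Virasoro}) together with~\eqref{eq:lambda:inequality:ters} finishes. This argument in fact works uniformly for all $n\ge 3$: had you bounded both the contact and the genus-lowering contributions below by $\lambda(g,m-1)$ from the outset and invoked $C+G\ge\tfrac{6g}{6g+1}$, the case $n=3$ would not have been exceptional at all. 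That is exactly how the paper organizes the Virasoro step, and it is what actually ``pins down the shape of $\lambda$''.
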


\begin{Corollary}\label{cor:inf:0}
Let $L(g)$, where $g=1,2,\dots,$ be any sequence of nonnegative integers such that $L(g)=o(g)$ as $g\to+\infty$. One has
\begin{gather}
\label{eq:main:lower:bound}
\lim_{g\to+\infty}\inf_{n\ge 1}\min_{\boldsymbol{d}\in\Pi_{L(g)}(3g-3+n,n)}\epsilon(\boldsymbol{d})=0 .
\end{gather}
\end{Corollary}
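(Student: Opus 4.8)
The plan is to derive the Corollary directly from Theorem~\ref{th:Main:Theorem} by combining the lower bound $\epsilon(\boldsymbol{d}) \ge \lambda(g,L) - 1$ with a matching asymptotic upper bound and a limit computation showing $\lambda(g,L(g)) \to 1$. First I would observe that the infimum over $n\ge 1$ in \eqref{eq:main:lower:bound} can be split: for $n = 1$ and $n = 2$ we have $\epsilon(3g-2) = 0$ by \eqref{eq:epsilon:for:1:correlators} and $\epsilon(k,3g-1-k) \in [-\frac{2}{6g-1}, 0]$ by \eqref{eq:main:bounds}, both of which tend to $0$ and stay bounded below by $\lambda(g,0)-1$ anyway; so the substantive case is $n \ge 3$, where Theorem~\ref{th:Main:Theorem} applies to every $\boldsymbol{d} \in \Pi_{L(g)}(3g-3+n,n)$ and gives, uniformly in $n$ and $\boldsymbol{d}$,
\[
\epsilon(\boldsymbol{d}) \ge \lambda(g,L(g)) - 1.
\]
Hence $\inf_{n\ge 1}\min_{\boldsymbol{d}} \epsilon(\boldsymbol{d}) \ge \lambda(g,L(g)) - 1$ for all large $g$.

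Next I would establish that $\liminf_{g\to\infty}$ of the left-hand side of \eqref{eq:main:lower:bound} is $\le 0$: this is immediate because $\Pi_{L(g)}(3g-3+n,n)$ always contains the partition $(0^{n-1}, 3g-3+n)$, for which $\epsilon = 0$ by \eqref{eq:epsilon:0:1corr}, so the minimum over the set is always $\le 0$. Combining the two inequalities, it remains only to show
\[
\lim_{g\to+\infty}\lambda\big(g, L(g)\big) = 1
\]
under the hypothesis $L(g) = o(g)$. From the definition \eqref{eq:lambda}, $\lambda(g,L)$ is a product of $L+1$ factors each of the form $1 - O(1/g)$ (more precisely each factor lies between $1 - \frac{2}{6(g-L)-1}$ and $1$), so $\lambda(g,L) \ge \big(1 - \frac{2}{6(g-L)-1}\big)^{L+1}$. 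Taking logarithms, $-\log \lambda(g,L) \le -(L+1)\log\big(1 - \frac{2}{6(g-L)-1}\big) = O\big(\frac{L+1}{g-L}\big)$, and since $L = o(g)$ forces $g - L = g(1+o(1)) \to \infty$ and $\frac{L+1}{g-L} \to 0$, we get $\log\lambda(g,L(g)) \to 0$, i.e.\ $\lambda(g,L(g)) \to 1$. This squeezes the liminf and limsup of the quantity in \eqref{eq:main:lower:bound} both to $0$, proving the Corollary.

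The only genuinely delicate point is the elementary estimate that $\lambda(g,L(g))\to 1$ when $L(g)=o(g)$; everything else is bookkeeping. The subtlety there is that the number of factors $L+1$ grows while each deviates from $1$ by roughly $\frac{1}{6g}$, so one needs the product of $o(g)$ such factors to still converge to $1$ — which works precisely because $o(g)\cdot O(1/g) \to 0$. One must be slightly careful that the factors $1 - \frac{1}{6(g-i)+1}$ for $0 \le i \le L-1$ are all bounded below by the single quantity $1 - \frac{2}{6(g-L)-1}$ (valid since $g - i \ge g - L + 1$ and $\frac{1}{6(g-i)+1} \le \frac{1}{6(g-L)+7} < \frac{2}{6(g-L)-1}$ for $g > L$), which legitimizes replacing the whole product by a single power for the purpose of the bound. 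I would also note that since $\lambda(g,L) < 1$ always, the liminf in \eqref{eq:main:lower:bound} is in fact a genuine limit, and the statement is sharp.
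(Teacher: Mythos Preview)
Your argument is correct and follows essentially the same approach as the paper's own proof: apply Theorem~\ref{th:Main:Theorem} to get the uniform lower bound $\epsilon(\boldsymbol{d})\ge\lambda(g,L(g))-1$, use the explicit partitions with $\epsilon=0$ to cap the infimum from above by~$0$, and show $\lambda(g,L(g))\to 1$ when $L(g)=o(g)$. The paper's proof is terser --- it simply asserts the limit $\lambda(g,L(g))\to 1$ as a direct consequence of definition~\eqref{eq:lambda} and cites~\eqref{eq:epsilon:for:1:correlators} for the upper bound --- whereas you spell out the product estimate explicitly; your case split $n=1,2$ versus $n\ge 3$ is unnecessary, since by the paper's convention $\Pi_L(3g-2,1)=\Pi(3g-2,1)$ and $\Pi_L(3g-1,2)=\Pi(3g-1,2)$, so Theorem~\ref{th:Main:Theorem} already covers all $n\ge 1$ uniformly.
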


\begin{proof}[Proof of Corollary~\ref{cor:inf:0}]
Definition~\eqref{eq:lambda} of $\lambda(g,L)$ implies that for any sequence $L(g)$ of nonnegative integers satisfying $L(g)=o(g)$ as $g\to+\infty$ one has
\[
\lim_{g\to+\infty} \lambda(g,L(g))=1 .
\]
Now~\eqref{eq:main:lower:bound} follows from combination of~\eqref{eq:1:plus:epsilon:samller:than:lambda} and~\eqref{eq:epsilon:for:1:correlators}.
\end{proof}

\begin{Remark}Proposition~3.2 in~\cite{Liu:Xu} claims
that for any triple $(n,K,M)$ of positive integers one has
\[
\lim_{g\to+\infty}\max_{\boldsymbol{d}\in\Pi_K(3g-3+n,n)}|\epsilon(\boldsymbol{d})|=0
\]
under the additional requirement that $d_{n-1}\le M$.
\end{Remark}

We start by proving three Lemmas (corresponding to
the string and the dilaton equations, and to Virasoro constraints).
It would be useful to introduce the following notation.
Given $\boldsymbol{d}\in\Pi(3g-3+n,n)$ let
\begin{gather}\label{eq:floor:brackets}
\lfloor \tau_{d_1} \cdots \tau_{d_n}\rfloor_{g,n}=
\frac{(6g-5+2n)!!}{(2d_1+1)!!\cdots(2d_n+1)!!}
\cdot\frac{1}{g!\cdot 24^g} .
\end{gather}
By definition of $\epsilon(\boldsymbol{d})$ we have
\begin{gather}\label{eq:def:of:epsilon}
\langle \tau_{d_1} \cdots \tau_{d_n}\rangle_{g,n}
=
\lfloor \tau_{d_1} \cdots \tau_{d_n}\rfloor_{g,n}
\cdot\big(1+\epsilon(\boldsymbol{d})\big) .
\end{gather}

From now on we suppose that $g\ge 1$.

\begin{Lemma}\label{lm:string}
Let $\boldsymbol{d}\in\Pi(3g-2+n,n-k)$ such that
$d_j>0$ for $j=1,\dots,n-k$. We assume that $k\ge 0$ and $n-k>0$.
Define $\delta_{\rm string}(0^{k+1},\boldsymbol{d})$
by equation
\begin{gather}
\big\lfloor \tau_0^{k+1}\tau_{d_1} \cdots \tau_{d_{n-k}}\big\rfloor_{g,n+1}
\cdot\big(1+\delta_{\rm string}\big(0^{k+1},\boldsymbol{d}\big)\big)
\nonumber\\
\qquad{} =
\big\lfloor \tau_0^k\tau_{d_1-1} \cdots \tau_{d_{n-k}}\big\rfloor_{g,n}
+\dots +\big\lfloor \tau_0^k\tau_{d_1} \cdots \tau_{d_{n-k}-1}\big\rfloor_{g,n} .\label{eq:string:floor}
\end{gather}
Then
\[
\delta_{\rm string}\big(0^{k+1},\boldsymbol{d}\big)=\frac{n-k-1}{6g-3+2n} .
\]
In particular, for any $\boldsymbol{d}$ as above and for any
$k\ge 0$ we have
\begin{gather}\label{eq:delta:string:is:positive}
\delta_{\rm string}\big(0^{k+1},\boldsymbol{d}\big)\ge 0 .
\end{gather}
\end{Lemma}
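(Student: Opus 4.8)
The plan is to unwind both sides of~\eqref{eq:string:floor} using the explicit formula~\eqref{eq:floor:brackets} for the floor brackets. The left-hand side has a single $\lfloor\cdot\rfloor_{g,n+1}$ term, while the right-hand side is a sum of $n-k$ terms of type $\lfloor\cdot\rfloor_{g,n}$. Since all these brackets share the common factor $1/(g!\cdot 24^g)$, that factor cancels throughout, and what remains is a purely combinatorial identity about double factorials. So the first step is to write out $\lfloor\tau_0^{k+1}\tau_{d_1}\cdots\tau_{d_{n-k}}\rfloor_{g,n+1}$ explicitly: the ambient dimension index becomes $6g-5+2(n+1)=6g-3+2n$, there are $k+1$ factors $(2\cdot 0+1)!!=1$ in the denominator, and the surviving denominator is $(2d_1+1)!!\cdots(2d_{n-k}+1)!!$.

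Next I would compute a single term on the right, say $\lfloor\tau_0^k\tau_{d_1-1}\tau_{d_2}\cdots\tau_{d_{n-k}}\rfloor_{g,n}$. Here the dimension index is $6g-5+2n$, there are $k$ trivial factors from the zeros, and the denominator is $(2(d_1-1)+1)!!\,(2d_2+1)!!\cdots(2d_{n-k}+1)!! = (2d_1-1)!!\,(2d_2+1)!!\cdots(2d_{n-k}+1)!!$. Dividing this term by the left-hand bracket gives the ratio $\dfrac{(6g-5+2n)!!}{(6g-3+2n)!!}\cdot\dfrac{(2d_1+1)!!}{(2d_1-1)!!} = \dfrac{2d_1+1}{6g-3+2n}$, using $(6g-3+2n)!!=(6g-3+2n)\,(6g-5+2n)!!$. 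Summing over the $n-k$ choices of which $d_j$ to decrement, the right-hand side of~\eqref{eq:string:floor}, divided by the left bracket, equals $\dfrac{\sum_{j=1}^{n-k}(2d_j+1)}{6g-3+2n} = \dfrac{2(d_1+\dots+d_{n-k})+(n-k)}{6g-3+2n}$.

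Then I would plug in the constraint on $\boldsymbol{d}$: since $\boldsymbol{d}\in\Pi(3g-2+n,n-k)$ we have $d_1+\dots+d_{n-k}=3g-2+n$, so the numerator becomes $2(3g-2+n)+(n-k)=6g-4+2n+n-k = (6g-3+2n)+(n-k-1)$. Therefore the quotient is $1+\dfrac{n-k-1}{6g-3+2n}$, which by definition of $\delta_{\mathrm{string}}$ in~\eqref{eq:string:floor} gives exactly $\delta_{\mathrm{string}}(0^{k+1},\boldsymbol{d})=\dfrac{n-k-1}{6g-3+2n}$. The positivity claim~\eqref{eq:delta:string:is:positive} is then immediate: the hypotheses $k\ge 0$ and $n-k>0$ force $n-k\ge 1$, hence $n-k-1\ge 0$, and the denominator $6g-3+2n$ is positive since $g\ge 1$. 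There is no real obstacle here — the only thing to be careful about is bookkeeping of the double-factorial shift $(2d_j+1)!!/(2d_j-1)!!=2d_j+1$ and the index shift in $(6g-3+2n)!!$, and making sure the degree constraint $\sum d_j=3g-2+n$ (note the "$+n$", reflecting that we are on $\overline{\mathcal M}_{g,n-k}$ with the floor bracket's ambient-dimension convention before applying string) is substituted correctly.
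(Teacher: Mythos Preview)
Your proof is correct and follows essentially the same approach as the paper: divide both sides of~\eqref{eq:string:floor} by the left-hand bracket, use~\eqref{eq:floor:brackets} to reduce each term on the right to $(2d_j+1)/(6g-3+2n)$, and then plug in $\sum_j d_j=3g-2+n$ to simplify. The paper's proof is just a more compressed version of exactly this computation.
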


\begin{proof}Dividing both sides of equation~\eqref{eq:string:floor} by
$\big\lfloor \tau_0^{k+1}\tau_{d_1} \dots \tau_{d_{n-k}}\big\rfloor_{g,n+1}$
and applying definition~\eqref{eq:floor:brackets}
to all terms involved in the right-hand side of the resulting
equation we get
\begin{gather*}
1+\delta_{\rm string}\big(0^{k+1},\boldsymbol{d}\big)
=\frac{(2d_1+1)+\dots+(2d_{n-k}+1)}{6g-3+2n}
 \\
 \hphantom{1+\delta_{\rm string}\big(0^{k+1},\boldsymbol{d}\big)}{} =
\frac{2(3g-2+n)+(n-k)}{6g-3+2n}
=\frac{6g-4+3n-k}{6g-3+2n}
=1+\frac{n-k-1}{6g-3+2n} .\!\!\!\tag*{\qed}
\end{gather*}\renewcommand{\qed}{}
\end{proof}

\begin{Corollary}\label{cor:epsilon:0:2corr}
For any $(d_1,d_2)\in\Pi(3g-2+n,2)$ and for any $n\in\N$
one has
\begin{gather}\label{eq:epsilon:0:2corr}
\epsilon\big(0^{n-1},d_1,d_2\big)\ge -\frac{2}{6g-1} .
\end{gather}
\end{Corollary}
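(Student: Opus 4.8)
The plan is to reduce the statement about the $n$-point correlator $\langle \tau_0^{n-1}\tau_{d_1}\tau_{d_2}\rangle$ to the known bound for $2$-correlators, namely the quoted Theorem from \cite{DGZZ:volume} (inequality \eqref{eq:main:bounds}), by repeatedly stripping off the $\tau_0$ insertions using the string equation. Concretely, I would apply the string equation \eqref{eq:string} $n-1$ times to express $\langle \tau_0^{n-1}\tau_{d_1}\tau_{d_2}\rangle_{g,n+1}$ as a nonnegative integer combination of $2$-correlators $\langle \tau_{a}\tau_{b}\rangle_{g,2}$ with $a+b = 3g-1$, and then compare this with what the ``floor'' expression $\lfloor\tau_0^{n-1}\tau_{d_1}\tau_{d_2}\rfloor$ decomposes into under the corresponding identity \eqref{eq:string:floor} for the floor brackets supplied by Lemma \ref{lm:string}.

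The key mechanism is that Lemma \ref{lm:string} tells us the string equation, when written for the floor brackets, is \emph{exact up to the explicit factor} $1+\delta_{\rm string}$, and crucially $\delta_{\rm string}\ge 0$ by \eqref{eq:delta:string:is:positive}. So the scheme is: first I would show by a short induction on $n$ that
\[
\big\langle \tau_0^{n-1}\tau_{d_1}\tau_{d_2}\big\rangle_{g,n+1}
= \sum_{(a,b)} c_{a,b}\,\big\langle \tau_a\tau_b\big\rangle_{g,2},
\qquad
\big\lfloor \tau_0^{n-1}\tau_{d_1}\tau_{d_2}\big\rfloor_{g,n+1}
\ge \sum_{(a,b)} c_{a,b}\,\big\lfloor \tau_a\tau_b\big\rfloor_{g,2},
\]
where the $c_{a,b}$ are the same nonnegative integers in both lines (they are just the multinomial-type coefficients produced by iterating the string equation, which has identical combinatorial shape on correlators and on floor brackets), and the inequality on the second line comes from discarding the factors $1+\delta_{\rm string}\ge 1$ at each step. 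Then I would invoke \eqref{eq:main:bounds} (together with the trivial cases $\epsilon(0,3g-1)=0$ and the endpoint $\epsilon(1,3g-2)=-2/(6g-1)$) to get $\langle\tau_a\tau_b\rangle_{g,2}\ge \big(1-\tfrac{2}{6g-1}\big)\lfloor\tau_a\tau_b\rfloor_{g,2}$ for every pair $(a,b)$ occurring in the sum. Combining, $\langle\tau_0^{n-1}\tau_{d_1}\tau_{d_2}\rangle_{g,n+1}\ge \big(1-\tfrac{2}{6g-1}\big)\sum c_{a,b}\lfloor\tau_a\tau_b\rfloor_{g,2}$, and since $1-\tfrac{2}{6g-1}>0$ I may also replace the last sum by the larger quantity $\lfloor\tau_0^{n-1}\tau_{d_1}\tau_{d_2}\rfloor_{g,n+1}$ only if the inequality goes the right way — so I must be slightly careful about the direction, and it is cleaner to keep the sum and note $\sum c_{a,b}\lfloor\tau_a\tau_b\rfloor_{g,2}\le \lfloor\tau_0^{n-1}\tau_{d_1}\tau_{d_2}\rfloor_{g,n+1}$, which would instead push the bound the wrong way.

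This last point is in fact the main obstacle, and it tells me the correct order of operations: rather than discarding the $\delta_{\rm string}$ factors, I should keep the string identity for floor brackets as an \emph{equality} and apply the $2$-correlator bound term by term \emph{before} collapsing the sum, so that the common coefficients $c_{a,b}$ cancel cleanly and the single factor $1-\tfrac{2}{6g-1}$ factors out. Equivalently — and this is probably the slickest route — I would prove the statement by a direct induction on $n$ using only Lemma \ref{lm:string}: assume $\langle\tau_0^{n-2}\tau_{d_1}\tau_{d_2}\rangle \ge (1-\tfrac{2}{6g-1})\lfloor\tau_0^{n-2}\tau_{d_1}\tau_{d_2}\rfloor$ for all admissible $(d_1,d_2)$ at level $n$, apply the string equation to the correlator and \eqref{eq:string:floor} to the floor bracket, use $\delta_{\rm string}\ge 0$ so that the floor side only gets larger, and conclude. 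The base case $n=1$ is exactly \eqref{eq:main:bounds} (with the convention that $\epsilon(0,3g-1)=0$ handles the one missing pair), and the induction step is then just monotonicity plus the sign of $\delta_{\rm string}$. I would also need to check the degenerate situations where some $d_j$ becomes $0$ after a subtraction (so that the hypothesis ``$d_j>0$'' of Lemma \ref{lm:string} fails) — these are handled by first relabeling so that at each stage the positive entries come first, exactly as the statement of Lemma \ref{lm:string} with its parameter $k$ is set up to allow, and by Remark \ref{rm:permutation:of:d} this relabeling does not affect $\epsilon$.
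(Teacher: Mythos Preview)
Your final ``slickest route'' --- the direct induction on $n$ using the string equation on the correlator side, the identity \eqref{eq:string:floor} on the floor side, and $\delta_{\rm string}\ge 0$ --- is exactly the paper's proof; the paper also disposes of the degenerate case $d_1=0$ or $d_2=0$ upfront via \eqref{eq:epsilon:0:1corr} rather than by relabeling mid-induction, but this is cosmetic.

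One correction to your exploratory first paragraph: you have the floor inequality backwards. From $\lfloor\tau_0^{k+1}\cdots\rfloor\,(1+\delta_{\rm string})=\sum\lfloor\tau_0^k\cdots\rfloor$ with $\delta_{\rm string}\ge 0$ one gets $\sum\lfloor\tau_0^k\cdots\rfloor\ge\lfloor\tau_0^{k+1}\cdots\rfloor$, not the reverse, and iterating gives $\sum_{a,b} c_{a,b}\lfloor\tau_a\tau_b\rfloor\ge\lfloor\tau_0^{n-1}\tau_{d_1}\tau_{d_2}\rfloor$. With the correct direction your first approach already goes through in one line:
\[
\big\langle\tau_0^{n-1}\tau_{d_1}\tau_{d_2}\big\rangle
=\sum_{a,b} c_{a,b}\langle\tau_a\tau_b\rangle
\ge\Big(1-\tfrac{2}{6g-1}\Big)\sum_{a,b} c_{a,b}\lfloor\tau_a\tau_b\rfloor
\ge\Big(1-\tfrac{2}{6g-1}\Big)\big\lfloor\tau_0^{n-1}\tau_{d_1}\tau_{d_2}\big\rfloor .
\]
So there was never an obstacle; the two approaches are the same argument, unrolled versus not.
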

\begin{proof}If one of $d_1$, $d_2$
is equal to zero, the statement for arbitrary $n$
follows from~\eqref{eq:epsilon:0:1corr}, so from now on
we assume
that both $d_1$, $d_2$ are strictly positive.
For $n=1$ the statement follows directly from~\eqref{eq:main:bounds}.
This serves us as a base of induction
in $n$. Suppose that for all $n=1,\dots,k$ the statement
is true. Let us prove it for $n=k+1$:
\begin{gather*}
\big\langle \tau_0^{k+1}\tau_{d_1}\tau_{d_2}\big\rangle_{g,k+3}
=\big\langle \tau_0^k\tau_{d_1-1}\tau_{d_2}\big\rangle_{g,k+2}
+\big\langle \tau_0^k\tau_{d_1}\tau_{d_2-1}\big\rangle_{g,k+2}\\
\hphantom{\big\langle \tau_0^{k+1}\tau_{d_1}\tau_{d_2}\big\rangle_{g,k+3}}{} \ge
\left(1-\frac{2}{6g-1}\right)\cdot
\big(\big\lfloor \tau_0^k\tau_{d_1-1}\tau_{d_2}\big\rfloor_{g,k+2}
+\big\lfloor \tau_0^k\tau_{d_1}\tau_{d_2-1}\big\rfloor_{g,k+2}
\big)\\
\hphantom{\big\langle \tau_0^{k+1}\tau_{d_1}\tau_{d_2}\big\rangle_{g,k+3}}{}
 =
\left(1-\frac{2}{6g-1}\right)
\cdot\big(1+\delta_{\rm string}\big(0^{k+1},d_1,d_2\big)\big)\cdot
\big\lfloor \tau_0^{k+1}\tau_{d_1}\tau_{d_2}\big\rfloor_{g,k+3}\\
\hphantom{\big\langle \tau_0^{k+1}\tau_{d_1}\tau_{d_2}\big\rangle_{g,k+3}}{} \ge
\left(1-\frac{2}{6g-1}\right)\cdot
\big\lfloor \tau_0^{k+1}\tau_{d_1}\tau_{d_2}\big\rfloor_{g,k+3} ,
\end{gather*}
where the first equality is the string equation;
inequality between the first and the second lines is
the assumption of the induction; the equality
between the second and the third line is
equation~\eqref{eq:string:floor}; the inequality
between the third and the forth line is
an implication of~\eqref{eq:delta:string:is:positive}
and of the fact that all the factors in both lines are
positive.
\end{proof}

\begin{Corollary}\label{cor:L:equals:0}
For any $g,n\in\N$ and for any
$\boldsymbol{d}\in\Pi_0(3g-3+n,n)$ one has
\begin{gather}\label{eq:epsilon:is:smaller:than:lambda:0}
1+\epsilon(\boldsymbol{d})\ge\lambda(g,0) .
\end{gather}
\end{Corollary}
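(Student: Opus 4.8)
The plan is to derive \eqref{eq:epsilon:is:smaller:than:lambda:0} as an essentially immediate consequence of Corollary~\ref{cor:epsilon:0:2corr}, together with the vanishing \eqref{eq:epsilon:for:1:correlators} of the error term for $1$-correlators, once one unwinds what membership in $\Pi_0(3g-3+n,n)$ actually means. Recall from \eqref{eq:lambda:0} that $\lambda(g,0)=1-\frac{2}{6g-1}$, and, as noted just after it, that $0<\lambda(g,0)<1$ (here $g\ge 1$, so $g>0=L$).

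First I would split according to $n$. For $n=1$ the set $\Pi_0(3g-2,1)=\Pi(3g-2,1)$ contains only $\boldsymbol{d}=(3g-2)$, so \eqref{eq:epsilon:for:1:correlators} gives $\epsilon(\boldsymbol{d})=0$ and hence $1+\epsilon(\boldsymbol{d})=1>\lambda(g,0)$. For $n=2$ we have $\Pi_0(3g-1,2)=\Pi(3g-1,2)$ by convention, i.e.\ $\boldsymbol{d}=(d_1,d_2)$ with $d_1+d_2=3g-1$ arbitrary. For $n\ge 3$, since every $d_i\ge 0$, the constraint $d_1+\dots+d_{n-2}\le 0$ forces $d_1=\dots=d_{n-2}=0$, so $\boldsymbol{d}=\big(0^{n-2},d_{n-1},d_n\big)$ with $d_{n-1}+d_n=3g-3+n$.

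Next I would feed the remaining two cases into Corollary~\ref{cor:epsilon:0:2corr}, which states that $\epsilon\big(0^{m-1},e_1,e_2\big)\ge -\frac{2}{6g-1}$ for every $m\in\N$ and every $(e_1,e_2)\in\Pi(3g-2+m,2)$. In the case $n\ge 3$, apply it with $m=n-1$ and $(e_1,e_2)=(d_{n-1},d_n)$: since $d_{n-1}+d_n=3g-3+n=3g-2+(n-1)$ we have $(d_{n-1},d_n)\in\Pi(3g-2+(n-1),2)$, so the corollary yields $\epsilon(\boldsymbol{d})=\epsilon\big(0^{n-2},d_{n-1},d_n\big)\ge -\frac{2}{6g-1}$. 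In the case $n=2$, apply it with $m=1$ and $(e_1,e_2)=(d_1,d_2)$ to get $\epsilon(\boldsymbol{d})\ge -\frac{2}{6g-1}$ directly. In both cases $1+\epsilon(\boldsymbol{d})\ge 1-\frac{2}{6g-1}=\lambda(g,0)$, which is \eqref{eq:epsilon:is:smaller:than:lambda:0}.

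The statement is thus essentially a reformulation of Corollary~\ref{cor:epsilon:0:2corr}, so there is no real obstacle; the only point requiring a little care is the bookkeeping of parameters---matching the counting variable of Corollary~\ref{cor:epsilon:0:2corr} (which records the number of extra zero entries and shifts the total degree accordingly) with the variable $n$ in the statement of the present corollary. If one prefers not to route through Corollary~\ref{cor:epsilon:0:2corr}, the $n=2$ case can alternatively be read off directly from the bounds \eqref{eq:main:bounds} after reordering the two entries via Remark~\ref{rm:permutation:of:d}.
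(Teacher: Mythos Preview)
Your proof is correct and follows essentially the same approach as the paper: split into the cases $n=1$, $n=2$, and $n\ge 3$, invoking respectively \eqref{eq:epsilon:for:1:correlators}, the $2$-correlator bound, and Corollary~\ref{cor:epsilon:0:2corr}. The only cosmetic difference is that the paper cites \eqref{eq:main:bounds} directly for $n=2$ rather than routing it through Corollary~\ref{cor:epsilon:0:2corr} with $m=1$, which you yourself note as an alternative.
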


\begin{proof}Recalling convention~\eqref{eq:lambda:0} for $\lambda(g,0)$ we conclude that
for $n=1$ inequality~\eqref{eq:epsilon:is:smaller:than:lambda:0}
follows from~\eqref{eq:epsilon:for:1:correlators};
for $n=2$ inequality~\eqref{eq:epsilon:is:smaller:than:lambda:0}
follows from~\eqref{eq:main:bounds};
for $n\ge 3$ inequality~\eqref{eq:epsilon:is:smaller:than:lambda:0}
corresponds to~\eqref{eq:epsilon:0:2corr}.
\end{proof}

\begin{Lemma}\label{lm:dilaton}
Let $\boldsymbol{d}\in\Pi(3g-3+n,n)$.
Define $\delta_{\rm dilaton}(1,\boldsymbol{d})$
by equation
\begin{gather}\label{eq:dilaton:floor}
\lfloor \tau_1\tau_{d_1} \cdots \tau_{d_n}\rfloor_{g,n+1}
\cdot\big(1+\delta_{\rm dilaton}(1,\boldsymbol{d})\big)
=(2g-2+n)
\lfloor \tau_{d_1} \cdots \tau_{d_n}\rfloor_{g,n} .
\end{gather}
Then
\[
\delta_{\rm dilaton}(1,\boldsymbol{d})
=\frac{n-3}{6g-3+2n} .
\]
In particular,
\begin{gather}\label{eq:delta:dilaton}
\delta_{\rm dilaton}(1,\boldsymbol{d})
\begin{cases}
\ge 0&\text{when }n\ge 3 ,
\\
= -\dfrac{1}{6g+1}
&\text{when }n=2 ,
\vspace{1mm}\\
= -\dfrac{2}{6g-1}
&\text{when }n=1 .
\end{cases}
\end{gather}
\end{Lemma}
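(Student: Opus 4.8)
The plan is to imitate the proof of Lemma~\ref{lm:string}: equation~\eqref{eq:dilaton:floor} that \emph{defines} $\delta_{\rm dilaton}(1,\boldsymbol{d})$ involves only the combinatorial quantities $\lfloor\,\cdot\,\rfloor$, so no information about the actual correlators is needed and the lemma reduces to an elementary evaluation of a ratio of double factorials. Note that, unlike in Lemma~\ref{lm:string}, we do not even need any positivity assumption on the entries $d_j$.

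First I would divide both sides of~\eqref{eq:dilaton:floor} by $\lfloor \tau_1\tau_{d_1}\cdots\tau_{d_n}\rfloor_{g,n+1}$ and substitute the definition~\eqref{eq:floor:brackets} into the two brackets appearing on the right, being careful that the left-hand bracket carries $n+1$ marked points while the right-hand one carries $n$. The common factor $\tfrac{1}{g!\cdot 24^g}$ cancels, all the factors $(2d_i+1)!!$ cancel, the only surviving index double-factorial is $(2\cdot 1+1)!!=3$ coming from the $\tau_1$ insertion, and the ratio of the remaining top double factorials is
\[
\frac{(6g-5+2n)!!}{(6g-5+2(n+1))!!}=\frac{(6g-5+2n)!!}{(6g-3+2n)!!}=\frac{1}{6g-3+2n}.
\]
Multiplying by the prefactor $2g-2+n$ from~\eqref{eq:dilaton:floor} and by the leftover $3$ gives
\[
1+\delta_{\rm dilaton}(1,\boldsymbol{d})=\frac{3(2g-2+n)}{6g-3+2n}=\frac{6g-6+3n}{6g-3+2n}=1+\frac{n-3}{6g-3+2n},
\]
which is the asserted formula $\delta_{\rm dilaton}(1,\boldsymbol{d})=\tfrac{n-3}{6g-3+2n}$.

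The three cases in~\eqref{eq:delta:dilaton} then follow at once: substituting $n=1$ and $n=2$ yields $-\tfrac{2}{6g-1}$ and $-\tfrac{1}{6g+1}$ respectively, while for $n\ge 3$ the numerator $n-3$ is nonnegative and the denominator $6g-3+2n$ is positive (recall $g\ge 1$), so $\delta_{\rm dilaton}(1,\boldsymbol{d})\ge 0$. There is no real obstacle in this argument; the only point requiring care is the bookkeeping of the number of marked points, so that the index $6g-5+2(\#\text{points})$ in~\eqref{eq:floor:brackets} is tracked correctly on each side — exactly as in Lemma~\ref{lm:string}.
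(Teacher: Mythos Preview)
Your proposal is correct and follows exactly the paper's own argument: divide~\eqref{eq:dilaton:floor} by $\lfloor \tau_1\tau_{d_1}\cdots\tau_{d_n}\rfloor_{g,n+1}$, substitute~\eqref{eq:floor:brackets}, and cancel to obtain $1+\delta_{\rm dilaton}(1,\boldsymbol{d})=\tfrac{3(2g-2+n)}{6g-3+2n}$. You have simply made the cancellations more explicit than the paper does; the case analysis for~\eqref{eq:delta:dilaton} is immediate in both treatments.
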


\begin{proof}Dividing both sides of equation~\eqref{eq:dilaton:floor} by
$\lfloor \tau_1\tau_{d_1} \cdots \tau_{d_n}\rfloor_{g,n+1}$,
applying definition~\eqref{eq:floor:brackets}
and canceling common factors in the numerator and in the denominator
of the resulting expression we get
\[
1+\delta_{\rm dilaton}(1,\boldsymbol{d})
=\frac{3(2g-2+n)}{6g-3+2n}
=\frac{6g-6+3n}{6g-3+2n} .\tag*{\qed}
\]\renewcommand{\qed}{}
\end{proof}

\begin{Corollary}\label{cor:epsilon:L:1}
For any partition
$\boldsymbol{d}\in\Pi_1(3g-3+n,n)$ one has
\begin{gather}\label{eq:epsilon:L:1}
1+\epsilon(\boldsymbol{d})\ge
\left(1-\frac{1}{6g+1}\right)
\cdot\left(1-\frac{2}{6g-1}\right) .
\end{gather}
\end{Corollary}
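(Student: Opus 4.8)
The plan is to split $\Pi_1(3g-3+n,n)$ according to whether $d_1+\dots+d_{n-2}$ equals $0$ or $1$, and to reduce both cases to Corollary~\ref{cor:L:equals:0} together with the dilaton equation. First I would record the elementary inequality $\lambda(g,0)=1-\frac{2}{6g-1}\ge\bigl(1-\frac{1}{6g+1}\bigr)\bigl(1-\frac{2}{6g-1}\bigr)$, which holds since $0<1-\frac{1}{6g+1}<1$ and $1-\frac{2}{6g-1}>0$. Consequently, whenever $1+\epsilon(\boldsymbol d)\ge\lambda(g,0)$ the desired bound~\eqref{eq:epsilon:L:1} is automatic; by Corollary~\ref{cor:L:equals:0} this already settles every $\boldsymbol d\in\Pi_0(3g-3+n,n)$, and in particular the cases $n=1,2$ (where $\Pi_1=\Pi_0=\Pi$) and the subcase $d_1+\dots+d_{n-2}=0$ for $n\ge3$.

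It remains to treat partitions with $n\ge3$ and $d_1+\dots+d_{n-2}=1$, i.e.\ exactly one of the first $n-2$ entries equals $1$ and the others vanish; by Remark~\ref{rm:permutation:of:d} I may assume $\boldsymbol d=\bigl(1,0^{n-3},d_{n-1},d_n\bigr)$. I would then apply the dilaton equation to strip off the entry $\tau_1$, writing
\[
\bigl\langle\tau_1\tau_0^{n-3}\tau_{d_{n-1}}\tau_{d_n}\bigr\rangle_{g,n}=(2g-3+n)\,\bigl\langle\tau_0^{n-3}\tau_{d_{n-1}}\tau_{d_n}\bigr\rangle_{g,n-1},
\]
and observe that the reduced partition $\boldsymbol d'=\bigl(0^{n-3},d_{n-1},d_n\bigr)$ lies in $\Pi_0\bigl(3g-3+(n-1),n-1\bigr)$, with the convention $\Pi_0(3g-1,2)=\Pi(3g-1,2)$ covering the degenerate case $n=3$. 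Corollary~\ref{cor:L:equals:0} then gives $\langle\boldsymbol d'\rangle_{g,n-1}\ge\lambda(g,0)\,\lfloor\boldsymbol d'\rfloor_{g,n-1}$.

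To convert this back into an estimate on $\lfloor\boldsymbol d\rfloor_{g,n}$, I would invoke Lemma~\ref{lm:dilaton} applied to the $(n-1)$-entry partition $\boldsymbol d'$: it asserts $(2g-3+n)\lfloor\boldsymbol d'\rfloor_{g,n-1}=\bigl(1+\delta_{\rm dilaton}(1,\boldsymbol d')\bigr)\lfloor\tau_1\boldsymbol d'\rfloor_{g,n}$, and $\lfloor\tau_1\boldsymbol d'\rfloor_{g,n}=\lfloor\boldsymbol d\rfloor_{g,n}$ because the bracket~\eqref{eq:floor:brackets} depends only on the multiset of indices. Since $\boldsymbol d'$ has $n-1\ge2$ entries, the case list~\eqref{eq:delta:dilaton} yields $\delta_{\rm dilaton}(1,\boldsymbol d')\ge-\frac{1}{6g+1}$ (with equality exactly when $n=3$). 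Chaining the three relations and cancelling the positive factor $\lfloor\boldsymbol d\rfloor_{g,n}$ via~\eqref{eq:def:of:epsilon} produces $1+\epsilon(\boldsymbol d)\ge\bigl(1-\frac{1}{6g+1}\bigr)\lambda(g,0)$, which is~\eqref{eq:epsilon:L:1}. I do not expect a genuine obstacle here: the only points requiring care are the bookkeeping of which $\Pi_0$-set the reduced partition lands in — notably the $n=3$ case where $\boldsymbol d'$ is a $2$-correlator handled by~\eqref{eq:main:bounds} — and the verification that the least value of $\delta_{\rm dilaton}(1,\,\cdot\,)$ over partitions with at least two entries is exactly $-\frac{1}{6g+1}$.
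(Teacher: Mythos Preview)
Your argument is correct, and it takes a slightly different---and more economical---route than the paper's.

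The paper proceeds in two stages for the case $n\ge3$ with a single entry equal to~$1$: it first proves the bound for the three-entry partition $(1,d_1,d_2)$ by applying the dilaton equation and the $2$-correlator bound~\eqref{eq:main:bounds}, and then, separately, runs an induction on the number of leading zeros in $\bigl(0^{k+1},1,d_1,d_2\bigr)$ using the string equation, mimicking the proof of Corollary~\ref{cor:epsilon:0:2corr}. You instead strip off the entry~$\tau_1$ \emph{first} via the dilaton equation, which lands you directly on a partition $\boldsymbol d'=\bigl(0^{n-3},d_{n-1},d_n\bigr)\in\Pi_0$ of length~$n-1$, and then invoke the already-established Corollary~\ref{cor:L:equals:0} in one stroke. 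Because Corollary~\ref{cor:L:equals:0} has already absorbed the string-equation induction (through Corollary~\ref{cor:epsilon:0:2corr}), your ordering avoids repeating that induction. The only delicate point---which you handle correctly---is that $\boldsymbol d'$ has $n-1\ge2$ entries, so in Lemma~\ref{lm:dilaton} the relevant case of~\eqref{eq:delta:dilaton} gives $\delta_{\rm dilaton}(1,\boldsymbol d')\ge-\frac{1}{6g+1}$, with equality precisely when $n=3$; this is exactly the factor you need.
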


\begin{proof}We have seen in~\eqref{eq:epsilon:for:1:correlators}
that for all $1$-correlators we have
$\epsilon(3g-2)=0$,
so for $n=1$ the statement is true.
For $n=2$ the statement is a direct implication
of equation~\eqref{eq:main:bounds}.
Suppose that $n\ge 3$.

By Remark~\ref{rm:permutation:of:d}, the quantity
$\epsilon(\boldsymbol{d})$ does not change under any permutation of the entries of $\boldsymbol{d}$. Thus, we can permute the first $n-2$ elements of the partition without affecting the value of $\epsilon(\boldsymbol{d})$, in particular, we can place them in the growing order. Since the sum of the first $n-2$
elements is less than or equal to $1$ either they are all
equal to $0$ or they form the sequence $(0,\dots,0,1)$
after such reordering. If they all are equal to $0$, the
statement follows from equation~\eqref{eq:epsilon:0:2corr}
from Corollary~\eqref{cor:epsilon:0:2corr}.

It remains to consider the case when $n\ge 3$ and when the first $n-2$ elements form a sequence $(0,\dots,0,1)$. We prove first the desired inequality for partitions of the form $(1,d_1,d_2)$:
\begin{gather*}
\langle \tau_1\tau_{d_1}\tau_{d_2}\rangle_{g,3}
=(2g)\cdot\langle \tau_{d_1} \tau_{d_2}\rangle_{g,2}
\ge\left(1-\frac{2}{6g-1}\right)\cdot
(2g)\lfloor \tau_{d_1}\tau_{d_2}\rfloor_{g,2}
\\
\hphantom{\langle \tau_1\tau_{d_1}\tau_{d_2}\rangle_{g,3}}{}
=\left(1-\frac{2}{6g-1}\right)\cdot
\lfloor \tau_1\tau_{d_1}\tau_{d_2}\rfloor_{g,3}
\cdot\big(1+\delta_{\rm dilaton}(1,d_1,d_2)\big)
\\
\hphantom{\langle \tau_1\tau_{d_1}\tau_{d_2}\rangle_{g,3}}{}
\ge\left(1-\frac{2}{6g-1}\right)\left(1-\frac{1}{6g+1}\right)
\cdot\lfloor \tau_1\tau_{d_1}\tau_{d_2}\rfloor_{g,3} .
\end{gather*}
Here the first equality is the dilaton equation;
the inequality which follows is
equation~\eqref{eq:main:bounds}; the equality
between the first two lines is equation~\eqref{eq:dilaton:floor}
and the inequality between the second and the third line
is based on equation~\eqref{eq:delta:dilaton}.

To complete the proof of Corollary~\ref{cor:epsilon:L:1} we prove it for partitions of the form
$\big(0^{k+1},1,d_1,d_2\big)$ by induction in $k\ge 0$. The proof follows line-by-line the proof of Corollary~\ref{cor:epsilon:0:2corr}.
\end{proof}

\begin{Lemma}\label{lm:Virasoro}
Let $\boldsymbol{d}\in\Pi(3g-3+n-k,n)$, where $k,n\in\N$.
Define $\delta_{\rm Virasoro}(k+1,\boldsymbol{d})$
by equation
\begin{gather}
\lfloor\tau_{k+1}\tau_{d_1}\cdots\tau_{d_n}\rfloor_g
\big(1+\delta_{\rm Virasoro}(k+1,\boldsymbol{d})\big)
\nonumber\\
\qquad{} =\frac{1}{(2k+3)!!}
\Bigg(\sum_{j=1}^n
\frac{(2k+2d_j+1)!!}{(2d_j-1)!!}\lfloor\tau_{d_1}\cdots
\tau_{d_{j}+k}\cdots\tau_{d_n}\rfloor_g\nonumber\\
\qquad\quad{} +\frac{1}{2}\sum_{\substack{r+s=k-1\\r,s\ge 0}}
(2r+1)!!(2s+1)!!\lfloor\tau_r\tau_s\tau_{d_1}\cdots\tau_{d_n}\rfloor_{g-1}\Bigg) .\label{eq:Virasoro:floor}
\end{gather}
Then
\begin{gather}\label{eq:delta:Virasoro}
\delta_{\rm Virasoro}(k+1,\boldsymbol{d})
=\frac{n-3}{6g-3+2n}-\frac{2k(2n-5)}{(6g-3+2n)(6g-5+2n)} .
\end{gather}
\end{Lemma}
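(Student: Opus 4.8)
The plan is a direct computation, entirely parallel to the proofs of Lemmas~\ref{lm:string} and~\ref{lm:dilaton}: divide both sides of~\eqref{eq:Virasoro:floor} by $\lfloor\tau_{k+1}\tau_{d_1}\cdots\tau_{d_n}\rfloor_{g,n+1}$, expand every floor bracket by means of definition~\eqref{eq:floor:brackets}, and collect the resulting ratios of double factorials summand by summand. Since $(k+1)+(d_1+\cdots+d_n)=3g-2+n=3g-3+(n+1)$, the bracket on the left is that of a $(g,n+1)$-correlator, so $6g-5+2(n+1)=6g-3+2n$ and
\[
\frac{1}{(2k+3)!!}\cdot\frac{1}{\lfloor\tau_{k+1}\tau_{d_1}\cdots\tau_{d_n}\rfloor_{g,n+1}}
=\frac{(2d_1+1)!!\cdots(2d_n+1)!!\cdot g!\cdot 24^g}{(6g-3+2n)!!},
\]
which is the common factor to be multiplied against each term on the right-hand side of~\eqref{eq:Virasoro:floor}.

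For the first (connected) sum I would fix $j$ and note that $\tau_{d_1}\cdots\tau_{d_j+k}\cdots\tau_{d_n}$ is a $(g,n)$-correlator (total degree $3g-3+n$). Multiplying its floor bracket by the common factor above, all $(2d_i+1)!!$ with $i\neq j$ cancel, leaving $(2d_j+1)!!$ in the numerator; the identity $(2k+2d_j+1)!!=(2(d_j+k)+1)!!$ lets the weight $\frac{(2k+2d_j+1)!!}{(2d_j-1)!!}$ cancel the $(2(d_j+k)+1)!!$ in the denominator down to $\frac{(2d_j+1)!!}{(2d_j-1)!!}=2d_j+1$; and $\frac{(6g-5+2n)!!}{(6g-3+2n)!!}=\frac{1}{6g-3+2n}$. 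Hence the $j$-th term equals $\frac{2d_j+1}{6g-3+2n}$, and summing over $j$ with $d_1+\cdots+d_n=3g-3+n-k$ gives $\frac{2(3g-3+n-k)+n}{6g-3+2n}=\frac{6g-6+3n-2k}{6g-3+2n}$ for the whole connected contribution.

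For the second sum I would note that $\tau_r\tau_s\tau_{d_1}\cdots\tau_{d_n}$ is a $(g-1,n+2)$-correlator (total degree $3(g-1)-3+(n+2)$), so its floor bracket carries $(6g-7+2n)!!$ and $\frac{1}{(g-1)!\cdot 24^{g-1}}$ and has $(2r+1)!!(2s+1)!!$ in the denominator, cancelling exactly the explicit weight $(2r+1)!!(2s+1)!!$. What survives after multiplication by the common factor is
\[
\frac{1}{2}\cdot\frac{(6g-7+2n)!!}{(6g-3+2n)!!}\cdot\frac{g!\cdot 24^g}{(g-1)!\cdot 24^{g-1}}=\frac{12g}{(6g-3+2n)(6g-5+2n)},
\]
independent of $r$ and $s$. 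Since there are exactly $k$ ordered pairs $(r,s)$ with $r+s=k-1$, $r,s\ge 0$, the second sum contributes $\frac{12gk}{(6g-3+2n)(6g-5+2n)}$.

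Adding the two contributions yields
\[
1+\delta_{\rm Virasoro}(k+1,\boldsymbol{d})=\frac{6g-6+3n-2k}{6g-3+2n}+\frac{12gk}{(6g-3+2n)(6g-5+2n)},
\]
and it remains to put this over the denominator $(6g-3+2n)(6g-5+2n)$ and simplify: using $6g-(6g-5+2n)=-(2n-5)$ one checks that the numerator equals $(6g-3+2n)(6g-5+2n)+(n-3)(6g-5+2n)-2k(2n-5)$, i.e.\ the right-hand side equals $1+\frac{n-3}{6g-3+2n}-\frac{2k(2n-5)}{(6g-3+2n)(6g-5+2n)}$, which is~\eqref{eq:delta:Virasoro}. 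There is no genuine obstacle here; the only steps needing a little care are the cancellation $(2k+2d_j+1)!!=(2(d_j+k)+1)!!$ in the connected sum and keeping track of the shift $6g-5+2n\mapsto 6g-7+2n$ coming from lowering the genus in the boundary term — precisely the double-factorial bookkeeping already carried out in Lemmas~\ref{lm:string} and~\ref{lm:dilaton}. (As a consistency check, setting $k=0$ makes the second sum empty and reproduces $\delta_{\rm dilaton}(1,\boldsymbol{d})=\frac{n-3}{6g-3+2n}$ of Lemma~\ref{lm:dilaton}.)
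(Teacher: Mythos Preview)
Your proof is correct and follows exactly the same approach as the paper: divide both sides of~\eqref{eq:Virasoro:floor} by $\lfloor\tau_{k+1}\tau_{d_1}\cdots\tau_{d_n}\rfloor_{g,n+1}$, evaluate the two contributions separately, and simplify. The paper's write-up is slightly terser (it records the intermediate expression $\frac{1}{6g-3+2n}\big((6g-6+3n-2k)+\tfrac{12gk}{6g-5+2n}\big)$ and then rewrites $\tfrac{12gk}{6g-5+2n}=2k-2k\cdot\tfrac{2n-5}{6g-5+2n}$), but your more detailed bookkeeping of the double-factorial cancellations is equivalent and arguably clearer.
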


\begin{proof}Dividing both sides of equation~\eqref{eq:Virasoro:floor} by
$\lfloor \tau_{k+1}\tau_{d_1} \cdots \tau_{d_n}\rfloor_{g,n+1}$,
applying definition~\eqref{eq:floor:brackets}
and canceling common factors in the numerator and in the denominator
of the resulting expression we get
\begin{gather*}
1+\delta_{\rm Virasoro}(k+1,\boldsymbol{d})\\
\qquad{} =
\frac{1}{6g-3+2n}\cdot\left(
\big((2d_1+1)+\dots+(2d_n+1)\big)
+\frac{1}{2}\cdot k\cdot \frac{24g}{6g-5+2n}\right) \\
 \qquad{} =\frac{1}{6g-3+2n}\cdot\left(
\big(6g-6+3n-2k\big)+2k-2k\cdot\frac{2n-5}{6g-5+2n}\right)\\
\qquad{} = 1+\frac{n-3}{6g-3+2n}-\frac{2k(2n-5)}{(6g-3+2n)(6g-5+2n)} .\tag*{\qed}
\end{gather*}\renewcommand{\qed}{}
\end{proof}

\begin{Remark}\label{rm:lower:bound:Virasoro}
In expression~\eqref{eq:Virasoro:floor} we ignored the third term in the Virasoro constraints. Since this third term is, clearly, positive,
this is suitable for getting a lower bound
instead of exact asymptotics.
It is widely believed that the third term
of Virasoro constraints becomes negligible in large genera. We expect that technique from~\cite{Aggarwal:Siegel:Veech} might be useful for replacing
the lower bound in~\eqref{eq:main:lower:bound} by the exact asymptotics
under strengthening restrictions on $\alpha$.
\end{Remark}

We shall need the following technical corollary of Lemma~\ref{lm:Virasoro}.

\begin{Corollary}\label{cor:delat:Virasoro:bound}
Let $k,n$ be integers satisfying $k\ge 0$, $n\ge 2$.
Let $\boldsymbol{d}$ be a partition $\boldsymbol{d}\in\Pi(3g-3+n-k,n)$, such that
$k+1\le d_j$ for $j=1,\dots,n-2$,
and $k+d_1+\dots+d_{n-2}\le\frac{3}{2}g$.
Then
\begin{gather}\label{eq:delat:Virasoro:bound}
\delta_{\rm Virasoro}(k+1,\boldsymbol{d})\ge -\frac{1}{6g+1} .
\end{gather}
\end{Corollary}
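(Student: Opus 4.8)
The plan is to substitute the exact value of $\delta_{\rm Virasoro}(k+1,\boldsymbol{d})$ given by Lemma~\ref{lm:Virasoro} into~\eqref{eq:delat:Virasoro:bound} and to reduce the whole statement to one elementary inequality in the integers $k$, $n$, $g$. The only consequence of the hypotheses that I would use is the following: from $k+1\le d_j$ for $j=1,\dots,n-2$ together with $k+d_1+\dots+d_{n-2}\le\frac{3}{2}g$ one gets $(n-2)(k+1)+k\le\frac{3}{2}g$, that is,
\[
2(n-1)k\le 3g-2n+4 .
\]
For $n=2$ this is vacuous, and in that case only $k\ge 0$ will be needed.

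Next I would rewrite the target. Adding $\frac{1}{6g+1}$ to the expression~\eqref{eq:delta:Virasoro} and using that $n\ge 2$ forces $\frac{1}{6g+1}\ge\frac{1}{6g-3+2n}$, one may combine this term with $\frac{n-3}{6g-3+2n}$ to obtain
\[
\delta_{\rm Virasoro}(k+1,\boldsymbol{d})+\frac{1}{6g+1}
\ge\frac{(n-2)(6g-5+2n)-2k(2n-5)}{(6g-3+2n)(6g-5+2n)} .
\]
Since the denominator is positive for $g\ge 1$, $n\ge 2$, it suffices to prove
\[
(n-2)(6g-5+2n)\ge 2k(2n-5) .
\]
For $n=2$ the left-hand side is $0$ while the right-hand side equals $-2k\le 0$, so this is just $k\ge 0$. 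For $n\ge 3$ we have $2n-5\ge 1>0$, and multiplying $2(n-1)k\le 3g-2n+4$ by the positive factor $\frac{2n-5}{n-1}$ reduces the claim to the polynomial inequality
\[
(3g-2n+4)(2n-5)\le(n-1)(n-2)(6g-5+2n) .
\]

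Finally I would check this last inequality for all integers $g\ge 1$ and $n\ge 3$ by treating both sides as affine functions of $g$: the coefficient of $g$ on the right, $6(n-1)(n-2)$, dominates the coefficient $3(2n-5)$ on the left because this amounts to $2n^2-8n+9\ge 0$, which holds for every $n$; and at $g=0$ the left-hand side $(4-2n)(2n-5)$ is nonpositive whereas the right-hand side $(n-1)(n-2)(2n-5)$ is nonnegative. Hence the inequality holds for all $g\ge 0$, in particular for $g\ge 1$. The only step that deserves some care is replacing $\frac{1}{6g+1}$ by the smaller quantity $\frac{1}{6g-3+2n}$, since one must make sure this does not destroy the estimate; the displayed computation shows that the resulting slack is precisely what is needed. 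I do not anticipate any genuine difficulty beyond keeping the cases $n=2$ and $n\ge 3$ apart and carrying out the linear-in-$g$ bookkeeping.
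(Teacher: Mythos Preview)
Your argument is correct. You follow the same overall strategy as the paper---substitute the formula from Lemma~\ref{lm:Virasoro}, split into $n=2$ and $n\ge 3$, and verify an elementary inequality---but the paper's execution in the case $n\ge 3$ is considerably shorter. Instead of combining $\frac{1}{6g+1}$ with $\frac{n-3}{6g-3+2n}$ and chasing a polynomial inequality in $g$ and $n$, the paper simply drops the nonnegative term $\frac{n-3}{6g-3+2n}$, uses only the weaker consequence $(k+1)(n-2)\le\frac{3}{2}g$ of the hypotheses to get $2k(2n-5)<6g$, and then bounds the denominator from below by $(6g+3)(6g+1)$ since $n\ge 3$. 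This gives
\[
-\frac{2k(2n-5)}{(6g-3+2n)(6g-5+2n)}\ge -\frac{6g}{(6g+3)(6g+1)}>-\frac{1}{6g+1}
\]
in one line, bypassing your affine-in-$g$ verification entirely. Your route works, but the paper's cruder estimates already suffice and save most of the bookkeeping.
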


\begin{proof}Use expression~\eqref{eq:delta:Virasoro} for
$\delta_{\rm Virasoro}(k+1,\boldsymbol{d})$.
When $n=2$ we get
\[
\delta_{\rm Virasoro}(k+1,\boldsymbol{d})=-\frac{1}{6g+1}+\frac{2k}{(6g+1)(6g-1)}\ge-\frac{1}{6g+1} .
\]

Let $n\ge 3$. By assumption, $(k+1)\le d_j$ for $j=1,\dots,d_{n-2}$, so $(k+1)(n-2)\le \frac{3}{2}g$, and hence $2k(2n-5)< 6g$, which implies that
\begin{gather*}
\delta_{\rm Virasoro}((k+1,\boldsymbol{d}))=\frac{n-3}{6g-3+2n}-\frac{2k(2n-5)}{(6g-3+2n)(6g-5+2n)}\\
\hphantom{\delta_{\rm Virasoro}((k+1,\boldsymbol{d}))}{} \ge -\frac{2k(2n-5)}{(6g-3+2n)(6g-5+2n)}
\ge -\frac{6g}{(6g+3)(6g+1)}>-\frac{1}{6g+1} .\!\!\!\tag*{\qed}
\end{gather*}\renewcommand{\qed}{}
\end{proof}

Before passing to the step of induction, we recapitulate the properties of the function $\lambda(g,L)$ defined in~\eqref{eq:lambda}.
Recall that the arguments $g$, $L$ of $\lambda(g,L)$ are nonnegative integers
satisfying $g>L$. Inequalities~\eqref{eq:lambda:inequalities}--\eqref{eq:lambda:inequality:ters}
below follow from the definition of $\lambda(g,L)$. Each inequality is
applied to those ordered pairs $g$, $L$ for which the argument of $\lambda$ on
both sides of the inequality belongs to the domain of definition of~$\lambda$. We have
\begin{gather}\label{eq:lambda:inequalities}
1>\lambda(g+1,L)>\lambda(g,L)>\lambda(g,L+1)>0 ,
\end{gather}
and
\begin{gather}\label{eq:lambda:inequality:bis}
\left(1-\frac{1}{6g+1}\right)
\cdot\lambda(g-1,L-1)=\lambda(g,L) .
\end{gather}
Combining the latter two relations we get
\begin{gather}
\left(1-\frac{1}{6g+1}\right)\cdot\lambda(g,L)>\left(1-\frac{1}{6(g-1)+1}\right)\cdot\lambda(g,L)\nonumber\\
\hphantom{\left(1-\frac{1}{6g+1}\right)\cdot\lambda(g,L)}{} >\left(1-\frac{1}{6(g-1)+1}\right)
\cdot\lambda(g-1,L) =\lambda(g,L+1) .\label{eq:lambda:inequality:ters}
\end{gather}

\begin{Proposition}[step of induction] \label{eq:prop:step:of:induction}
Suppose that for some nonnegative integer $L_0$
the following uniform bound is valid:
for all integers $g$, $L$ satisfying $g> L_0$, $0\le L\le L_0$,
for all partitions $\boldsymbol{d}\in\Pi_L(3g-2+n,n+1)$,
where $n\ge 0$, one has
\[
1+\epsilon(\boldsymbol{d})\ge \lambda(g,L) .
\]
Then for all integers $g$, $L$
satisfying $g> L_0+1$,
$0\le L\le L_0+1$,
for all partitions $\boldsymbol{d}\in\Pi_L(3g-2+n,n+1)$,
where $n\ge 0$, one also has
\[
1+\epsilon(\boldsymbol{d})\ge \lambda(g,L) .
\]
\end{Proposition}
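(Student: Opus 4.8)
The plan is to reduce immediately to the case $L=L_0+1$: for $L\le L_0$ the desired inequality $1+\epsilon(\boldsymbol d)\ge\lambda(g,L)$ with $g>L_0+1>L_0$ is literally the hypothesis of the Proposition. So I fix $g>L_0+1$ and prove $1+\epsilon(\boldsymbol d)\ge\lambda(g,L_0+1)$ for every $\boldsymbol d\in\Pi_{L_0+1}(3g-2+n,n+1)$ by induction on $n\ge 0$. For $n=0$ this is \eqref{eq:epsilon:for:1:correlators} together with $\lambda(g,L_0+1)<1$; for $n=1$ one has $\Pi_{L_0+1}(3g-1,2)=\Pi(3g-1,2)=\Pi_{L_0}(3g-1,2)$, so the hypothesis and \eqref{eq:lambda:inequalities} give it. So let $n\ge 2$, and by Remark~\ref{rm:permutation:of:d} reorder the entries so that $d_1\le\dots\le d_{n+1}$; then $d_1+\dots+d_{n-1}$ is the sum of the $n-1$ smallest entries and is at most the original value $\le L_0+1$. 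If $d_1+\dots+d_{n-1}\le L_0$ then $\boldsymbol d\in\Pi_{L_0}(3g-2+n,n+1)$ and we are done by the hypothesis; so we may assume $d_1+\dots+d_{n-1}=L_0+1\ge 1$, which forces $d_{n-1}\ge 1$ and hence $d_n,d_{n+1}\ge 1$. (Recall that all intersection numbers of $\psi$-classes, and all the brackets $\lfloor\,\cdot\,\rfloor$ of \eqref{eq:floor:brackets}, are positive.)

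The core move is to peel off the smallest entry $\tau_{d_1}$ by the string, dilaton, or Virasoro relation according to whether $d_1$ equals $0$, $1$, or is $\ge 2$, to show that each correlator produced on the right-hand side lies in a family already under control, and to reassemble using the floor-bracket identities of Lemmas~\ref{lm:string}, \ref{lm:dilaton}, \ref{lm:Virasoro}. \textbf{Case $d_1=0$} (which forces $n\ge 3$): the string equation \eqref{eq:string} writes $\langle\tau_{\boldsymbol d}\rangle$ as a sum of $n$-point correlators obtained by lowering one positive entry by $1$; a check of which two entries are the largest shows that, after reordering, each of them belongs to $\Pi_{L_0+1}(3g-3+n,n)$, so by the induction hypothesis on $n$ each is $\ge\lambda(g,L_0+1)$ times its floor bracket. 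Summing and invoking the identity \eqref{eq:string:floor} with $\delta_{\rm string}\ge 0$ (valid by \eqref{eq:delta:string:is:positive}, since $\boldsymbol d$ has at most $n-1$ zero entries) yields $1+\epsilon(\boldsymbol d)\ge\lambda(g,L_0+1)$. \textbf{Case $d_1=1$:} the dilaton equation gives $\langle\tau_{\boldsymbol d}\rangle=(2g-2+n)\langle\tau_{d_2}\cdots\tau_{d_{n+1}}\rangle_{g,n}$, and the $n$-point partition on the right has small-part sum $(L_0+1)-1=L_0$, hence is $\ge\lambda(g,L_0)$ times its floor bracket by the hypothesis. Multiplying by $(2g-2+n)$, using \eqref{eq:dilaton:floor} with $\delta_{\rm dilaton}\ge 0$ for $n\ge 3$ and $\delta_{\rm dilaton}=-\tfrac1{6g+1}$ for $n=2$ (cf.\ \eqref{eq:delta:dilaton}), and finally \eqref{eq:lambda:inequalities} (for $n\ge 3$), respectively $(1-\tfrac1{6g+1})\lambda(g,L_0)>\lambda(g,L_0+1)$ (which follows from \eqref{eq:lambda:inequalities} and \eqref{eq:lambda:inequality:bis}, equivalently \eqref{eq:lambda:inequality:ters}), gives the claim.

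\textbf{Case $d_1\ge 2$} (which forces $L_0\ge 1$): set $k=d_1-1\ge 1$ and apply the Virasoro constraint \eqref{eq:virasoro} to $\tau_{k+1}=\tau_{d_1}$, discarding the manifestly nonnegative disconnected third term as in Remark~\ref{rm:lower:bound:Virasoro}. This expresses $\langle\tau_{\boldsymbol d}\rangle$ as a positive combination of genus-$g$ correlators $\langle\tau_{d_2}\cdots\tau_{d_j+k}\cdots\tau_{d_{n+1}}\rangle_g$ (with $n$ points) and genus-$(g-1)$ correlators $\langle\tau_r\tau_s\tau_{d_2}\cdots\tau_{d_{n+1}}\rangle_{g-1}$ with $r+s=k-1$ (with $n+2$ points); inspecting the two largest entries shows that, after reordering, the former lie in $\Pi_{L_0}(3g-3+n,n)$ and the latter in $\Pi_{L_0-1}(3g-4+n,n+2)$, where $L_0-1\ge 0$. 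The hypothesis is available at genus $g$ and at genus $g-1$ (since $g-1>L_0$), and by \eqref{eq:lambda:inequality:bis} one has $\lambda(g,L_0)=(1-\tfrac1{6g+1})\lambda(g-1,L_0-1)<\lambda(g-1,L_0-1)$, so I may bound every term on the right below by $\lambda(g,L_0)$ times its floor bracket; assembling via \eqref{eq:Virasoro:floor} gives $\langle\tau_{\boldsymbol d}\rangle\ge\lambda(g,L_0)\bigl(1+\delta_{\rm Virasoro}(k+1,(d_2,\dots,d_{n+1}))\bigr)\lfloor\tau_{\boldsymbol d}\rfloor$. The hypotheses of Corollary~\ref{cor:delat:Virasoro:bound} hold here ($k+1\le d_j$ for $j=2,\dots,n-1$ by the ordering, and $k+d_2+\dots+d_{n-1}=L_0\le\tfrac32 g$ since $g\ge L_0+2$), so $\delta_{\rm Virasoro}\ge-\tfrac1{6g+1}$ by \eqref{eq:delat:Virasoro:bound}, and then $(1-\tfrac1{6g+1})\lambda(g,L_0)>\lambda(g,L_0+1)$ (from \eqref{eq:lambda:inequalities} and \eqref{eq:lambda:inequality:bis}) completes the induction.

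\textbf{Main obstacle.} All the analytic ingredients (the three Lemmas, Corollary~\ref{cor:delat:Virasoro:bound}, and the monotonicity and factorization identities \eqref{eq:lambda:inequalities}--\eqref{eq:lambda:inequality:ters} for $\lambda$) are already in place, so the real content is the combinatorial bookkeeping, chiefly in the case $d_1\ge 2$ and, to a lesser extent, in $d_1=0$: one must verify, for every way of raising or lowering a single entry, that the sum of the $N-2$ smallest entries of the resulting partition does not exceed $L_0$ (resp.\ $L_0-1$ for the genus-lowering terms), even when the modified entry overtakes one of the two originally largest entries and the partition has to be re-sorted before one reads off its small part. I expect this case-by-case check — together with keeping track of which partitions get bounded by the induction hypothesis on $n$ versus by the Proposition's hypothesis on $L_0$ — to be the only delicate point; once it is granted, each case is a direct substitution into the corresponding floor-bracket identity.
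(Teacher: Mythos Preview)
Your proof is correct and follows essentially the same strategy as the paper's: peel off one marked entry via the string, dilaton, or Virasoro relation according to whether it equals $0$, $1$, or is $\ge 2$, bound each resulting correlator by the inductive hypothesis, and reassemble using Lemmas~\ref{lm:string}--\ref{lm:Virasoro}, Corollary~\ref{cor:delat:Virasoro:bound}, and the monotonicity relations \eqref{eq:lambda:inequalities}--\eqref{eq:lambda:inequality:ters}. The only difference is organizational---you fully sort the partition and run a single induction on $n$ (so the Virasoro case $d_1\ge 2$ automatically has no zero entries), whereas the paper moves only the smallest \emph{positive} entry to the front and afterwards handles the zeros by a separate inner induction on their number; the combinatorial bookkeeping you flag as the ``main obstacle'' indeed goes through exactly as you outline.
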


\begin{proof}
We warn the reader that the total number of elements of the
partition is denoted in
Proposition~\ref{eq:prop:step:of:induction} by $n+1$ and
not by $n$ as in Theorem~\ref{th:Main:Theorem}. This allows
us to use formulae from the key Lemmas~\ref{lm:dilaton}
and~\ref{lm:Virasoro} without adjustments.

By convention $\Pi_L(3g-2,1)=\Pi(3g-2,1)$ and
$\Pi_L(3g-1,2)=\Pi(3g-1,2)$ for any $L\in\Z_{\ge 0}$.
We have seen in~\eqref{eq:epsilon:for:1:correlators}
that for all $1$-correlators we have
$\epsilon(3g-2)=0$,
so for $n=0$ the statement is trivially true.
For $n=1$ the statement is a direct implication
of inequality~\eqref{eq:main:bounds}:
\[
1+\epsilon(\boldsymbol{d})
\ge \left(1-\frac{2}{6g-1}\right)
=\lambda(g,0)
\ge\lambda(g,L) .
\]
Thus, from now on we can assume that $n\ge 2$. Let $\boldsymbol{d}\in\Pi_L(3g-2+n,n+1)$. If $L\le L_0$, then the statement makes part of the induction assumption. Hence, from now on we can assume that
\[
\boldsymbol{d}\in\Pi_{L_0+1}(3g-2+n,n+1)\setminus\Pi_{L_0}(3g-2+n,n+1) ,
\]
where $n\ge 2$ and $g> L_0+1$. This implies that
$d_1+\dots+d_{n-1}=L_0+1$ and, hence,
\begin{gather*}
d_{n}+d_{n+1}=3g-2+n-(L_0+1)> 2L_0+2 .
\end{gather*}

By Remark~\ref{rm:permutation:of:d}, the quantity $\epsilon(\boldsymbol{d})$ does not change under any permutation of the entries of~$\boldsymbol{d}$. Place to the leftmost position the smallest strictly positive element among the first $n-1$ elements. This operation does not change the last two elements of the partition and does not change the sum of its first $n-1$ elements. Denote the resulting partition by $(k+1,d_1,\dots,d_n)$. To prove the proposition we have to prove the inequality
\begin{gather}\label{eq:desired:inequality}
1+\epsilon(k+1,d_1,\dots,d_n)\ge \lambda(g,L_0+1) ,
\end{gather}
where
\begin{gather}
k+d_1+d_2+\dots+d_{n-2}=L_0 ,\qquad n\ge 2,\qquad k\ge 0 ,\nonumber\\
 k+1\le\min_{\substack{1\le i\le n-2\\d_i>0}} d_i ,\qquad g>L_0+1\ge 1 .\label{eq:restrictions}
\end{gather}

We consider the special case $k=0$ separately. In this special case,
when $n=2$ the
partition $(k+1,d_1,\dots,d_n)$ becomes $(1,d_1,d_2)$ and the
desired inequality is proved in~\eqref{eq:epsilon:L:1}. Assume that
$k=0$ and $n\ge 3$. By~\eqref{eq:delta:dilaton} we
have
\begin{gather}\label{eq:delta:dilaton:positive}
\delta_{\rm dilaton}(1,d_1,\dots,d_n)\ge 0 ,\qquad\text{for }n\ge 3 .
\end{gather}
Thus, for any $g> L_0+1$ we have
\begin{gather*}
\big(1+\epsilon(1,d_1,\dots,d_n)\big)\cdot \lfloor \tau_1\tau_{d_1} \cdots \tau_{d_n}\rfloor_{g,n+1}=
\langle \tau_1\tau_{d_1} \cdots \tau_{d_n}\rangle_{g,n+1}\\
\qquad{} =(2g-2+n)\langle \tau_{d_1} \cdots \tau_{d_n}\rangle_{g,n}
\ge (2g-2+n)\cdot\big(\lambda(g,L_0)\cdot
\lfloor \tau_{d_1} \cdots \tau_{d_n}\rfloor_{g,n}\big)
\\
\qquad{} =\lambda(g,L_0)\cdot
\lfloor \tau_1\tau_{d_1} \cdots \tau_{d_n}\rfloor_{g,n+1}
\cdot\big(1+\delta_{\rm dilaton}(1,\boldsymbol{d})\big)
\\
\qquad{} \ge \lambda(g,L_0)\cdot
\lfloor \tau_1\tau_{d_1} \cdots \tau_{d_n}\rfloor_{g,n+1}
> \lambda(g,L_0+1)
\cdot
\lfloor \tau_1\tau_{d_1} \cdots \tau_{d_n}\rfloor_{g,n+1} .
\end{gather*}
Here the first equality is
definition~\eqref{eq:def:of:epsilon}
of $\epsilon(1,d_1,\dots,d_n)$; the second equality
is the string equation~\eqref{eq:string}; the
inequality in the middle of the second line is the
induction assumption; the equality in the beginning of the
third line is definition~\eqref{eq:dilaton:floor}
of $(1+\delta_{\rm dilaton}(1,\boldsymbol{d}))$;
the inequality in the beginning of the last line
is a direct implication of~\eqref{eq:delta:dilaton:positive};
the last inequality is an implication of~\eqref{eq:lambda:inequalities}.

Suppose now that $k\ge 1$. We first prove the desired inequality~\eqref{eq:desired:inequality} in the special case when
\begin{gather}\label{eq:all:dj:strictly:positive}
d_j\ge 1 ,\qquad\text{for }j=1,\dots,n-2
\end{gather}
and then prove it in the most general situation when some of $d_j$ (possibly all of them) are equal to $0$.

Note that by assumption, $k+1$ is less than or equal
to any strictly positive element among $d_1,\dots,d_{n-2}$,
so inequalities~\eqref{eq:all:dj:strictly:positive},
actually, imply that
\begin{gather*}
d_j\ge k+1 ,\qquad\text{for }j=1,\dots,n-2.
\end{gather*}
Also, from~\eqref{eq:restrictions} we get
\[
k+d_1+d_2+\dots+d_{n-2}=L_0<\frac{3g}{2} .
\]
Thus, the partition $(k+1,d_1,\dots,d_n)$
satisfies assumptions of Corollary~\ref{cor:delat:Virasoro:bound}.

Let $g> L_0+1$. Under the above assumptions we have
\begin{gather*}
\big(1+\epsilon(k+1,d_1,\dots,d_n)\big)\cdot\lfloor\tau_{k+1}\tau_{d_1}\cdots\tau_{d_n}\rfloor_g
=\langle\tau_{k+1}\tau_{d_1}\cdots\tau_{d_n}\rangle_g
 \\
 \qquad{} \ge
\frac{1}{(2k+3)!!}
\Bigg(\sum_{j=1}^n
\frac{(2k+2d_j+1)!!}{(2d_j-1)!!}\langle\tau_{d_1}\cdots
\tau_{d_{j}+k}\cdots\tau_{d_n}\rangle_g\\
\qquad\quad{} +\frac{1}{2}\sum_{\substack{r+s=k-1\\r,s\ge 0}}
(2r+1)!!(2s+1)!!\langle\tau_r\tau_s\tau_{d_1}\cdots\tau_{d_n}\rangle_{g-1}
\Bigg)
\\ \qquad{} \ge
\frac{1}{(2k+3)!!}
\Bigg(\sum_{j=1}^{n-2}
\frac{(2k+2d_j+1)!!}{(2d_j-1)!!}
\cdot\lambda(g,L_0)\cdot
\lfloor\tau_{d_1}\cdots
\tau_{d_{j}+k}\cdots\tau_{d_n}\rfloor_g
\\
\qquad\quad {}+\frac{(2k+2d_{n-1}+1)!!}{(2d_{n-1}-1)!!}
\cdot\lambda(g,L_0-k)\cdot
\lfloor\tau_{d_1}\cdots\tau_{d_{n-2}}
\tau_{d_{n-1}+k}\tau_{d_n}\rfloor_g
\\
\qquad\quad {}+\frac{(2k+2d_{n}+1)!!}{(2d_{n}-1)!!}
\cdot\lambda(g,L_0-k)\cdot
\lfloor\tau_{d_1}\cdots\tau_{d_{n-1}}
\tau_{d_{n}+k}\rfloor_g
\\
\qquad\quad {}+\frac{1}{2}\sum_{\substack{r+s=k-1\\r,s\ge 0}}
(2r+1)!!(2s+1)!!
\cdot\lambda(g-1,L_0-1)\cdot
\lfloor\tau_r\tau_s\tau_{d_1}\cdots\tau_{d_n}\rfloor_{g-1}
\Bigg)
\\
\qquad{} \ge\lambda(g,L_0)\cdot\lfloor\tau_{k+1}\tau_{d_1}\cdots\tau_{d_n}\rfloor_g
\big(1+\delta_{\rm Virasoro}(k+1,\boldsymbol{d})\big)
\\
\qquad{} \ge\left(1-\frac{1}{6g+1}\right)\cdot\lambda(g,L_0)\cdot\lfloor\tau_{k+1}\tau_{d_1}\cdots\tau_{d_n}\rfloor_g
\\
\qquad{} \ge\lambda(g,L_0+1)\cdot\lfloor\tau_{k+1}\tau_{d_1}\cdots\tau_{d_n}\rfloor_g .
\end{gather*}
Here the first equality is
the definition~\eqref{eq:def:of:epsilon}
of $\epsilon(1,d_1,\dots,d_n)$;
the first inequality
is an instant corollary of the
Virasoro constraints in which we omitted
the terms in the third line of~\eqref{eq:virasoro}.
The second inequality is the induction assumption.
The third inequality combines
the inequality $\lambda(g,L_0-k)>\lambda(g,L_0)$
which follows from~\eqref{eq:lambda:inequalities},
the inequality $\lambda(g-1,L_0-1)>\lambda(g,L_0)$
which follows from~\eqref{eq:lambda:inequality:bis},
and the definition~\eqref{eq:Virasoro:floor}
of $\delta_{\rm Virasoro}(k+1,\boldsymbol{d})$.
The inequality
$\big(1+\delta_{\rm Virasoro}(k+1,\boldsymbol{d})\big)\ge
\big(1-\frac{1}{6g+1}\big)$ is justified by~\eqref{eq:delat:Virasoro:bound}.
The last inequality is justified in~\eqref{eq:lambda:inequality:ters}.

It remains to prove inequality~\eqref{eq:desired:inequality},
without extra assumptions~\eqref{eq:all:dj:strictly:positive}.
In other words, we have to prove the inequality
\[
1+\epsilon\big(0^s,k+1,d_1,\dots,d_{n-s}\big)\ge \lambda(g,L_0+1) .
\]

The case $n-s=0$ follows from~\eqref{eq:epsilon:0:1corr}. For $n-s=1$ inequality~\eqref{eq:epsilon:0:2corr} implies
\[
1+\epsilon\big(0^s,k+1,d_1\big)\ge 1-\frac{2}{6g-1} =\lambda(g,0) \ge \lambda(g,L_0+1) .
\]
Thus, we may assume that $n-s\ge 2$ and that the following inequalities are valid:
\[
\begin{cases}
k+d_1+d_2+\dots+d_{n-s-2}=L_0 ,
\\ n\ge s\ge 0 ,
\\ k\ge 1 ,
\\ d_j> k\quad\text{for }j=1,\dots,n-s-2 .
\end{cases}
\]

We proceed by induction in $s$. For $s=0$, which serves us as a base of induction, the statement is already proved. We
perform a step of induction as follows
\begin{gather*}
\big(1+\epsilon\big(0^{s+1},k+1,d_1,\dots,d_{n-s}\big)\big)
\cdot\big\lfloor\tau_0^{s+1}\tau_{k+1}\tau_{d_1}\cdots\tau_{d_{n-s}}\big\rfloor_g
 =\big\langle\tau_0^{s+1}\tau_{k+1}\tau_{d_1}\cdots\tau_{d_{n-s}}\big\rangle_g
\\
\qquad{} =\big\langle\tau_0^s\tau_k\tau_{d_1}\cdots\tau_{d_{n-s}}\big\rangle_g
+\big\langle\tau_0^s\tau_{k+1}\tau_{d_1-1}\cdots\tau_{d_{n-s}}\big\rangle_g
+\dots
+\big\langle\tau_0^s\tau_{k+1}\tau_{d_1}\cdots\tau_{d_{n-s}-1}\big\rangle_g
\\
\qquad{} =\big(1+\epsilon\big(0^s,k,d_1,\dots,d_{n-s}\big)\big)
\cdot\big\lfloor\tau_0^s\tau_k\tau_{d_1}\cdots\tau_{d_{n-s}}\big\rfloor_g
\\
\qquad\quad{}+\big(1+\epsilon\big(0^s,k+1,d_1-1,\dots,d_{n-s}\big)\big)
\cdot\big\lfloor\tau_0^s\tau_{k+1}\tau_{d_1-1}\cdots\tau_{d_{n-s}}\big\rfloor_g
+\cdots
\\
\qquad\quad{} +\big(1+\epsilon\big(0^s,k+1,d_1,\dots,d_{n-s}-1\big)\big)
\cdot\big\lfloor\tau_0^s\tau_{k+1}\tau_{d_1}\cdots\tau_{d_{n-s}-1}\big\rfloor_g
\\
\qquad{} \ge\lambda(g,L_0)
\cdot\big\lfloor\tau_0^s\tau_k\tau_{d_1}\cdots\tau_{d_{n-s}}\big\rfloor_g
+\lambda(g,L_0)
\cdot\big\lfloor\tau_0^s\tau_{k+1}\tau_{d_1-1}\cdots\tau_{d_{n-s}}\big\rfloor_g+\cdots
\\
\qquad\quad{} +\lambda(g,L_0)
\cdot\big\lfloor\tau_0^s\tau_{k+1}\tau_{d_1}\cdots\tau_{d_{n-s-2}-1}\tau_{d_{n-s-1}}\tau_{d_{n-s}}\big\rfloor_g\\
\qquad\quad{} +\lambda(g,L_0+1)
\cdot\big\lfloor\tau_0^s\tau_{k+1}\tau_{d_1}\cdots\tau_{d_{n-s-2}}\tau_{d_{n-s-1}-1}\tau_{d_{n-s}}\big\rfloor_g
\\
\qquad\quad{}
+\lambda(g,L_0+1)
\cdot\big\lfloor\tau_0^s\tau_{k+1}\tau_{d_1}\cdots\tau_{d_{n-s-2}}\tau_{d_{n-s-1}}\tau_{d_{n-s}-1}\big\rfloor_g
\\
\qquad{} \ge\lambda(g,L_0+1)\big(
\big\lfloor\tau_0^s\tau_k\tau_{d_1}\cdots\tau_{d_{n-s}}\big\rfloor_g
+\big\lfloor\tau_0^s\tau_{k+1}\tau_{d_1-1}\cdots\tau_{d_{n-s}}\big\rfloor_g
+\cdots\\
\qquad\quad{}
+\big\lfloor\tau_0^s\tau_{k+1}\tau_{d_1}\cdots\tau_{d_{n-s}-1}\big\rfloor_g
\big)\\
\qquad{} =
\lambda(g,L_0+1)
\cdot\big\lfloor\tau_0^{s+1}\tau_{k+1}\tau_{d_1}\cdots\tau_{d_{n-s}}\big\rfloor_g
\cdot\big(1+\delta_{\rm string}\big(0^{s+1},k+1,d_1,\dots,d_{n-s}\big)\big)\\
\qquad{} \ge \lambda(g,L_0+1)
\cdot\big\lfloor\tau_0^{s+1}\tau_{k+1}\tau_{d_1}\cdots\tau_{d_{n-s}}\big\rfloor_g.
\end{gather*}
Here the first equality is the definition~\eqref{eq:def:of:epsilon} of $\epsilon\big(0^{s+1},k+1,d_1,\dots,d_{n-s}\big)$. The second equality is the string equation~\eqref{eq:string}.
(Recall that by convention, if one of $d_{n-s-1}$ or $d_{n-s}$ is equal to zero, the term, containing the negative index $d_{n-s-1}-1$ or $d_{n-s}-1$ respectively, is missing in the string equation and below.)
The equality which follows, is equation~\eqref{eq:def:of:epsilon} applied to every term of the resulting expression. The inequality, where $\lambda$ appears on the left-hand side for the first time, is the induction assumption applied to each term. The next inequality follows from the inequality $\lambda(g,L_0)>\lambda(g,L_0+1)$,
see~\eqref{eq:lambda:inequalities}. The equality which follows is the definition~\eqref{eq:string:floor} of $\delta_{\rm string}\big(0^{s+1},k+1,d_1,\dots,d_{n-s}\big)$. The last inequality is justified by~\eqref{eq:delta:string:is:positive}.
\end{proof}

\begin{proof}[Proof of Theorem~\ref{th:Main:Theorem}]
For $L=0$ Theorem~\ref{th:Main:Theorem} follows from
Corollary~\ref{cor:L:equals:0}. For $L=1$ it follows from~\eqref{eq:epsilon:L:1}
combined with the fact that $\big(1-\frac{1}{6g+1}\big)
\cdot\big(1-\frac{2}{6g-1}\big)>\lambda(g,1)$.
For $L>1$ we apply recursively Proposition~\ref{eq:prop:step:of:induction}.
\end{proof}

\subsection*{Acknowledgements}
We thank A.~Aggarwal for precious comments on the
preliminary version of this text, which allowed to correct
several misprints and improve the presentation.
We are grateful to N.~Anantharaman, A.~Borodin, D.~Chen,
M.~Kazarian, S.~Lando, M.~M\"oller, A.~Okounkov, M.~Shapiro
for stimulating discussions. We thank MPIM in Bonn and MSRI
in Berkeley for providing us with excellent working
environment.
We thank anonymous referees for their
generous reports and for helpful suggestions which allowed to
improve the presentation.
The research of the second author was partially supported
by PEPS. The results of Section~\ref{s:Introduction} were
obtained at Saint Petersburg State University under support
of RSF grant 19-71-30002. This material is based upon work
supported by the ANR-19-CE40-0021 grant. It was also
supported by the NSF Grant DMS-1440140 while part of the
authors were in residence at the MSRI during the Fall 2019
semester.

\pdfbookmark[1]{References}{ref}
\LastPageEnding

\end{document}